\newtheorem{theorem}{Theorem}
 \newtheorem{definition}{Definition}
\newtheorem{corollary}{Corollary}
\def\ps@pprintTitle{%
 \let\@oddhead\@empty
 \let\@evenhead\@empty
 \def\@oddfoot{\reset@font\hfil\thepage\hfil}
 \let\@evenfoot\@oddfoot
}
\begin{document}
\begin{frontmatter}

\title{On testing the class of symmetry using entropy characterization and empirical likelihood approach}

\author[label1]{Ganesh Vishnu Avhad }
\author[label1]{ Ananya Lahiri}
\author[label2]{ Sudheesh K. Kattumannil}  \ead{skkattu@isichennai.res.in}
\address[label1]{Department of Mathematics and Statistics, Indian Institute of Technology, Tirupati, Andhra Pradesh, India}
\address[label2]{Statistical Sciences Division, Indian Statistical Institute, Chennai, Tamil Nadu, India}

 \doublespacing
\begin{abstract}
 In this paper, we obtain a new characterization result for symmetric distributions based on the entropy measure. Using the characterization, we propose a nonparametric test to test the symmetry of a distribution. We also develop the jackknife empirical likelihood and the adjusted jackknife empirical likelihood ratio tests. The asymptotic properties of the proposed test statistics are studied. We conduct extensive Monte Carlo simulation studies to assess the finite sample performance of the proposed tests. The simulation results indicate that the jackknife empirical likelihood and adjusted jackknife empirical likelihood ratio tests show better performance than the existing tests. Finally, two real data sets are analysed to illustrate the applicability of the proposed tests.
\end{abstract}

\begin{keyword}
 Continuous symmetric distribution $\cdot$ Cumulative entropy $\cdot$ $U$-statistics $\cdot$ Jackknife empirical likelihood $\cdot$ Wilks' theorem.
\end{keyword}
\end{frontmatter}
\doublespacing

\section{Introduction}\label{sec1}
\noindent In probability and statistics, the assessment of distributional symmetry is a fundamental problem. Many classical statistical procedures, such as the sign test and Wilcoxon signed-rank test, depend on the assumption of symmetry to ensure optimal performance. Methods such as trimmed means and $M$-estimators are constructed, particularly under the assumption of symmetry in the data.   Symmetry improves the effectiveness of resampling methods such as the bootstrap by accelerating the convergence of confidence intervals via the pivotal quantity. Symmetrically distributed pivotal quantities often lead to more accurate inferential outcomes, making the assumption of symmetry advantageous in a range of applications. In finance, models such as the Sharpe–Lintner capital asset pricing model and the Black–Scholes option pricing model heavily rely on symmetric about zero assumptions (\cite{davison2017symmetry}). The literature extensively examines various aspects of the symmetry of probability distributions, with numerous authors investigating the properties of symmetric distributions (see, e.g. \cite{ushakov2011one}, \cite{ahmadi2021characterization}, \cite{husseiny2024analyzing} and the references therein). Symmetric distributions are utilized by traders in the financial sector to forecast the value of assets over time, with the mean reversion hypothesis positing that asset prices will eventually revert to their long-term mean or average values based on the assumption of a symmetric distribution of prices over time, see \cite{ahmed2018stock}. 

The broad applicability of symmetric assumptions in statistical modelling has been well-documented; however, \cite{partlett2017measuring} reported potential issues with the indiscriminate fitting of symmetric distributions to data, emphasizing the potential risks associated with such practices. Therefore, the first step in any robust statistical inference involving symmetric distributions is to evaluate whether the data align sufficiently with the symmetry assumption. As a result of its wide range of practical uses, several goodness-of-fit tests for symmetric distributions using different approaches are available in the literature. The most famous classical sign and Wilcoxon signed-rank test (see, e.g. \cite{feuerverger1977empirical} and \cite{maesono1987competitors}), and its modified version can be found in \cite{vexler2023implement}. Symmetry holds significant relevance in non-parametric statistics and structured models, where a common assumption is that errors are distributed symmetrically. In the regression setup, \cite{allison2017monte} conducted an extensive Monte Carlo simulation study to test whether the error distribution in the linear regression model is symmetric. Additionally, several papers offer comparative analyses of symmetry tests; see  \cite{maesono1987competitors},   \cite{Patrick2006}, \cite{milovsevic2019comparison} and \cite{ivanovic2020comparison}. Characterization theorems play a crucial role, particularly in the development of goodness-of-fit tests; one may refer to \cite{marchetti2002characterization} and \cite{nikitin2017tests}.
Tests based on characterization provide a robust framework for assessing symmetry and have been applied to various families of distributions. Recently, \cite{milovsevic2016characterization}, \cite{ahmadi2019characterization} and \cite{bovzin2020new} proposed characterization-based tests for the univariate symmetry of the distribution. 
The entropy measures serve as a compelling alternative to traditional symmetry measures (\cite{fashandi2012characterizations}). Entropy, a notion built upon information theory, quantifies the uncertainty associated with a probability distribution. It captures the inherent randomness and disorder within the data, providing an in-depth perspective on distributional characteristics. Several forms of entropy, such as Shannon entropy, Rényi entropy, and Tsallis entropy, have been developed to address different aspects of uncertainty. Recently, several papers have been published on the characterization of distribution functions using the entropies of order statistics and record values. \cite{ahmadi2019characterization} introduced several characterization results for symmetric continuous distributions, which are established based on the properties of order statistics and various information measures. \cite{ahmadi2020characterization} obtained a characterization of the symmetric continuous distributions based on the properties of $k$-records and spacings. \cite{ahmadi2021characterization}, \cite{jose2022symmetry} and \cite{gupta2024some} developed new tests for the distributional symmetry based on the different information measures. 
   
\cite{thomas1975confidence} introduced the first paper to find the empirical likelihood-based confidence intervals for survival data analysis. The empirical likelihood method is a powerful non-parametric approach that does not depend on specific parametric assumptions, and inference based on it does not require variance estimation. \cite{jing2009jackknife} introduced the jackknife empirical likelihood (JEL) approach, integrating two non-parametric techniques: the jackknife and empirical likelihood. Since then, various tests based on JEL have been developed and explored in the literature. Some of the recent works in this area can be found in \cite{liu2023general}, \cite{huang2024jackknife}, \cite{chen2024jackknife} and \cite{suresh2025jackknife}. For a comprehensive overview of the JEL method, including recent advancements, the reader is referred to the review articles by \cite{lazar2021review} and \cite{liu2023review}. In addition, the JEL method can face challenges due to the empty set problem, as noted by \cite{chen2008adjusted} and \cite{jing2017transforming}. To enhance the accuracy of JEL, several techniques have been introduced, including bootstrap calibration (\cite{owen1988empirical}) and Bartlett correction (\cite{chen2007second}). To address both the empty set issue and low coverage probability, the adjusted jackknife empirical likelihood (AJEL) method was proposed by \cite{chen2008adjusted} and \cite{wang2015resampling}. In what follows, we introduce a novel entropy-based characterization for the symmetric distributions and propose a class of goodness-of-fit tests for the symmetric distributions. 

The rest of the article is organized as follows. In Section $\ref{sec2}$, we provide characterization results of the symmetric distributions based on the entropy measure, and with a few examples, we demonstrate the results. In Section $\ref{sec3}$, based on the random samples $X_1, X_2,..., X_n$ from an unknown distribution $F$, nonparametric goodness-of-fit test procedure is developed to test $H_0: F \in F_{S} $ against $H_1: F \notin F_{S}$, where $F_{S}$ = \{$F$: $F$ is a class of symmetric distributions\}. We also propose JEL and AJEL ratio tests for distribution symmetry based on the characterization introduced in Section $\ref{sec2}$. In Section $\ref{sec4}$, the finite sample performance of the proposed tests is evaluated and compared with other symmetry tests through a Monte Carlo simulation study. Section $\ref{sec5}$ presents and analyses two real examples. Finally, Section $\ref{sec6}$ concludes with some closing remarks.

\section{Characterization result based on entropy measure} \label{sec2}
\noindent Since the test is based on the entropy measure, we begin by modifying the generalized entropy measure obtained by \cite{kattumannil2022generalized}. 
 
\begin{definition}\label{def1}
Consider an absolutely continuous random variable with real values $X$ with a distribution function $F$ with a finite mean. Furthermore, let $\phi(\cdot)$ be a function of $X$, and let $w(\cdot)$ be a positive weight function. The generalized cumulative residual entropy $(\mathcal{GCRE})$ of $X$ is defined as 
\begin{eqnarray}\label{eq1}
    \mathcal{GCRE}(X) &=& \int_{-\infty}^{\infty} w(u) E\big [ \phi(X)-\phi(u)|X \geq u \big] dF(u),
\end{eqnarray} 
and generalized cumulative entropy $(\mathcal{GCE})$ of \( X \) is defined as
\begin{eqnarray}\label{eq2}
    \mathcal{GCE}(X) &=& \int_{-\infty}^{\infty} w(u) E\big [ \phi(u)-\phi(X)|X\leq u \big] dF(u),
\end{eqnarray}
here, $w(\cdot) $ and $ \phi(\cdot)$ can be arbitrarily chosen under the existence of the above integral such that $\mathcal{GCRE}(X)$ and $\mathcal{GCE}(X)$ become concave. 
\end{definition}

We now present the characterization theorem using the above definition, and we will use it to make our tests. The following theorem gives a relationship of symmetric distribution with $\mathcal{GCRE}(X)$ and $\mathcal{GCE}(X)$.
 
\begin{theorem}\label{theorem1} The following two statements are equivalent under the conditions $w(\cdot)$ is a positive weight function, for $x<u$,  $\phi(\cdot)\geq \phi(u)$ and $x>u$, $\phi(\cdot)\leq \phi(u)$: \\
\( (i) \) $X$ is a symmetric random variable;\\
\((ii)\)  $\mathcal{GCRE}(X)=\mathcal{GCE}(X)$.
\end{theorem}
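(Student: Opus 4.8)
The plan is to prove the two implications separately, working from the explicit integral representations of the two functionals. Writing $\bar{F}=1-F$ and letting $f$ denote the density, I would first record the iterated forms $\mathcal{GCRE}(X)=\int_{-\infty}^{\infty} w(u)\,\bar{F}(u)^{-1}\big(\int_u^{\infty}(\phi(x)-\phi(u))\,dF(x)\big)\,dF(u)$ and the analogous expression for $\mathcal{GCE}(X)$ with the region $\{X\le u\}$ and the factor $F(u)^{-1}$. A single interchange in the order of integration then lets me express each functional as an integral over the ordered region $\{(s,t):s<t\}$, which is the representation best suited to comparing them.

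For the direction $(i)\Rightarrow(ii)$ I would center the distribution at its point of symmetry $\theta$ and exploit the reflection $R(x)=2\theta-x$, under which $R(X)\stackrel{d}{=}X$ and the event $\{X\ge u\}$ is carried to $\{X\le 2\theta-u\}$. The key computation is the conditional-expectation identity $E[\phi(X)-\phi(u)\mid X\ge u]=E[\phi(2\theta-u)-\phi(X)\mid X\le 2\theta-u]$, which follows once the monotonicity hypothesis on $\phi$ is used in its antisymmetric form about $\theta$. Substituting this identity into the integral and then changing variables via $u\mapsto 2\theta-u$—so that $dF(u)$ and $w(u)$ are preserved by the symmetry of $f$ and of $w$—turns $\mathcal{GCRE}(X)$ into $\mathcal{GCE}(X)$, yielding the equality. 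The normalizing factors $\bar{F}(u)^{-1}$ and $F(u)^{-1}$ cause no difficulty here, since they are absorbed intrinsically into the conditional expectations, which the reflection matches term by term.

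The converse $(ii)\Rightarrow(i)$ is where I expect the real work. Here I would form the difference and, using the ordered-region representation, write it as
\begin{equation*}
\mathcal{GCRE}(X)-\mathcal{GCE}(X)=\iint_{s<t}(\phi(t)-\phi(s))\Big(\tfrac{w(s)}{\bar{F}(s)}-\tfrac{w(t)}{F(t)}\Big)f(s)f(t)\,ds\,dt.
\end{equation*}
The hypotheses now enter decisively: for $s<t$ the monotonicity of $\phi$ fixes the sign of $\phi(t)-\phi(s)$, while $w>0$ together with the density factors keeps the remaining weight positive, so the integrand is sign-definite and its integral can vanish only if the bracketed factor vanishes on the support. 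The crux is to convert this pointwise condition into the defining symmetry relation $F(\theta+x)+F(\theta-x)=1$; this passage from ``the integral is zero'' to ``$F$ is symmetric'' requires a continuity and uniqueness argument on the resulting functional equation for $F$ and $\bar{F}$, and it is the most delicate part of the proof. I would close by verifying that this functional equation admits only symmetric solutions, which completes the equivalence.
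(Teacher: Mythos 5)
Your forward direction is essentially the paper's argument recast via the reflection $R(x)=2\theta-x$, and it is arguably the more honest version: you correctly notice that closing the argument requires $\phi$ to be antisymmetric about $\theta$ and $w$ to be symmetric about $\theta$, neither of which is among the stated hypotheses (the paper instead asserts the pointwise identity $E[\phi(X)-\phi(u)\mid X\ge u]=E[\phi(u)-\phi(X)\mid X\le u]$ for \emph{every} $u$, which fails away from the centre: for $U[-1,1]$ with $\phi(x)=x$ the two sides are $(1-u)/2$ and $(1+u)/2$). That is largely a defect of the theorem as stated rather than of your plan, but you should not claim the needed antisymmetry ``follows from'' the monotonicity hypothesis on $\phi$ --- it does not, and this step needs either an added assumption or the specific choices $\phi(u)=u$, $w=F\bar F$ used later in the paper.

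The genuine gap is in $(ii)\Rightarrow(i)$. Your ordered-region identity
\[
\mathcal{GCRE}(X)-\mathcal{GCE}(X)=\iint_{s<t}\bigl(\phi(t)-\phi(s)\bigr)\Bigl(\tfrac{w(s)}{\bar F(s)}-\tfrac{w(t)}{F(t)}\Bigr)f(s)f(t)\,ds\,dt
\]
is correct, but the bracketed factor is a \emph{difference} of two positive quantities, so the integrand is not sign-definite and you cannot pass from ``the integral vanishes'' to ``the integrand vanishes.'' Worse, even if you could, the pointwise condition $w(s)/\bar F(s)=w(t)/F(t)$ for all $s<t$ in the support forces both sides to equal a common constant $c$, i.e. $w=c\bar F$ and $w=cF$ simultaneously, hence $F\equiv\bar F$ --- a contradiction, not the symmetry relation. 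The information in $\Delta=0$ cannot be localized to pairs $(s,t)$. The paper instead groups the double integral by the conditioning variable $u$, writing $\Delta=\int g(u)\,w(u)\,dF(u)$ with $g(u)=E[\phi(X)\mid X\ge u]+E[\phi(X)\mid X\le u]-2\phi(u)$ (up to sign), argues $g\equiv 0$, and then, specializing to $\phi(u)=u$, derives the functional equation $F(u)\bigl(1-F(u+y)\bigr)=\bigl(1-F(u)\bigr)F(u-y)$, from which symmetry follows by taking $u=F^{-1}(1/2)$. Your route never reaches a usable functional equation for $F$, so the converse as proposed does not go through; you would need to regroup by $u$ (or supply some completeness argument in place of sign-definiteness) and then actually carry out the reduction to the reflection equation rather than deferring it.
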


\begin{proof}
Firstly, to show \((i) \implies (ii)\), let \(X\) be a symmetric random variable. This implies that the distribution function \(F(x)\) is symmetric about a point \(u \in \mathbb{R}\), i.e.,  
\begin{equation}\label{sym}
    F(u + x) = 1 - F(u - x), \quad \text{and} \quad \Bar{F}(u + x) = F(u - x), \quad \forall x \in \mathbb{R}.
\end{equation}
Since \(X\) is symmetric about \(u\), it follows that
\begin{equation}\label{sym1}
E[\phi(X) - \phi(u) \mid X \geq u] = E[\phi(u) - \phi(X) \mid X \leq u].
\end{equation}
To show (\ref{sym1}), we have
\begin{align*}
E[\phi(X) - \phi(u) \mid X \geq u] &= \int_{u}^\infty \frac{\phi(x) - \phi(u)}{\Bar{F}(u)} \,dF(x).
\end{align*}
Applying the change of variable \(x = y + u\), and the identity \eqref{sym} evaluated at \(x = 0\) gives $F(u) = 1 - F(u)$, which leads to $F(u) = \frac{1}{2}$, and hence $F(u) = \Bar{F}(u)$. One has
\begin{align*}
E[\phi(X) - \phi(u) \mid X \geq u] &= \int_0^\infty \frac{\phi(u + y) - \phi(u)}{\Bar{F}(u)} \, dF(u + y).
\end{align*}
Next, applying the substitution \(y = u - x\) (so \(x = 2u - y\)), we have
\begin{align*}
E[\phi(X) - \phi(u) \mid X \geq u] &= \int_u^\infty \frac{\phi(2u - x) - \phi(u)}{F(u)} \, dF(2u - x).
\end{align*}
Letting \(z = 2u - x\) $\implies$ \(x = 2u - z\), we obtain
\begin{align*}
E[\phi(X) - \phi(u) \mid X \geq u] &= -\int_u^\infty \frac{\phi(z) - \phi(u)}{F(u)} \, dF(z) \\
&= \int_{-\infty}^u \frac{\phi(u) - \phi(z)}{F(u)} \, dF(z) \\
&= E[\phi(u) - \phi(X) \mid X \leq u].
\end{align*}
Therefore, it follows that
\begin{align*}
\mathcal{GCRE}(X) &= \int_{-\infty}^{\infty} w(u) \, E[\phi(X) - \phi(u) \mid X \geq u] \, dF(u) \\
&= \int_{-\infty}^{\infty} w(u) \, E[\phi(u) - \phi(X) \mid X \leq u] \, dF(u) \\
&= \mathcal{GCE}(X).
\end{align*}
Hence, \(\mathcal{GCRE}(X) = \mathcal{GCE}(X)\) when \(X\) is a symmetric random variable. Hence $(i) \implies (ii)$.

Next, to show $(ii) \implies (i)$, we have
\begin{align}\label{xx}
0&= \mathcal{GCRE}(X) - \mathcal{GCE}(X)\nonumber\\
&=\int_{-\infty}^{\infty} \int_{u}^{\infty} \frac{w(u)}{\Bar{F} (u)}  \Big(\phi(x)-\phi(u)\Big) dF(x)dF(u)-\int_{-\infty}^{\infty} \int_{-\infty}^{u}\frac{w(u)}{F(u)}  \Big(\phi(u)-\phi(x)\Big) dF(x)dF(u)\nonumber\\
&=  \int_{-\infty}^{\infty}  \frac{w(u)}{\Bar{F}(u)} \int_{u}^{\infty} \Big(\phi(x)-\phi(u)\Big) dF(x)dF(u)  -  \int_{-\infty}^{\infty} \frac{w(u)}{F(u)} \int_{-\infty}^{u} \Big(\phi(u)-\phi(x)\Big) dF(x)dF(u)\nonumber\\
&  = \int_{-\infty}^{\infty} \bigg(  \frac{1}{\Bar{F}(u)} \int_{u}^{\infty} \Big(\phi(x)-\phi(u)\Big) dF(x)   -   \frac{1}{F(u)} \int_{-\infty}^{u} \Big(\phi(u)-\phi(x)\Big) dF(x) \bigg) w(u)dF(u) \\
&= \int_{-\infty}^{\infty}g(u) w(u)dF(u) ~~ ~~(say).\nonumber
\end{align}
We know that if $g(u) \geq 0$ for all $u$ in some domain $D$, and 
$\int_D g(u)\, du = 0$, then $g(u) = 0$ almost everywhere (a.e.) on $D$.
Also, if $(\ref{xx})$ holds and it is noted that, under the conditions specified in the theorem, the function \(g(u)\) satisfies \(g(u) \geq 0\) for all $u$, then from the completeness property for the distribution of $U$, it follows that

\begin{eqnarray}\label{ness}
     &&\frac{1}{\Bar{F}(u)} \int_{u}^{\infty} \big(\phi(x)-\phi(u)\big) dF(x) = \frac{1}{F(u)} \int_{-\infty}^{u} \big(\phi(u)-\phi(x)\big) dF(x)\nonumber\\
     &\implies&\frac{1}{\Bar{F}(u)} \int_{u}^{\infty}  \phi(x) dF(x) -\phi(u) = \phi(u)-\frac{1}{F(u)} \int_{-\infty}^{u}  \phi(x)  dF(x)\nonumber\\
     &\implies&\frac{1}{\bar{F}(u)} \int_{u}^{\infty} \phi(x) dF(x) + \frac{1}{F(u)} \int_{-\infty}^{u} \phi(x) dF(x) = 2\phi(u).
\end{eqnarray}
Now, recall that the conditional expectation of \( \phi(X) \) given \( X \geq u \) is
\[
E[\phi(X) \mid X \geq u] = \frac{1}{\bar{F}(u)} \int_{u}^{\infty} \phi(x) dF(x),
\]
also, the conditional expectation of \( \phi(X) \) given \( X \leq u \) is
\[
E[\phi(X) \mid X \leq u] = \frac{1}{F(u)} \int_{-\infty}^{u} \phi(x) dF(x).
\]
Using these definitions, (\ref{ness}) simplifies to
\begin{eqnarray}\label{xx1} 
&&E[\phi(X) \mid X \geq u] + E[\phi(X) \mid X \leq u] = 2\phi(u)\nonumber\\
&\implies& E[\phi(X) \mid X \geq u] - \phi(u) = \phi(u) - E[\phi(X) \mid X \leq u]. 
\end{eqnarray}
Now, we assume without loss of generality $\phi(u)=u$. Hence, the mean residual life and the mean past life at time \( u \) are $E[X - u \mid X \geq u] =\frac{1}{\bar{F}(u)} \int_u^\infty \bar{F}(x) dx$ and $E[u - X \mid X \leq u] = \frac{1}{F(u)} \int_{-\infty}^u F(x) dx,$ respectively. From (\ref{xx1}), we have 
\begin{equation}\label{eqlty}
    \frac{1}{1 - F(u)} \int_u^\infty (1 - F(x)) dx = \frac{1}{F(u)} \int_{-\infty}^u F(x) dx.
\end{equation}
Now consider the transformation $x=u+y$ in the first integral, so that
  \begin{equation}\label{fstint}
       \int_u^\infty (1 - F(x)) dx = \int_0^\infty (1 - F(u + y)) dy,
  \end{equation}
and $x=u-y$ in the second integral, we get
\begin{equation}\label{secint}
  \int_{-\infty}^u F(x) dx = \int_0^\infty F(u - y) dy. 
\end{equation}
From (\ref{fstint}) and (\ref{secint}), we get
\begin{equation*}
\frac{1}{1 - F(u)} \int_0^\infty (1 - F(u + y)) dy = \frac{1}{F(u)} \int_0^\infty F(u - y) dy.
\end{equation*}
For simplicity, we can multiply both sides by \( F(u)(1 - F(u)) \) to get 
\begin{align*}
   &F(u) \int_0^\infty (1 - F(u + y)) dy  = (1 - F(u)) \int_0^\infty F(u - y) dy.\\
\implies\quad &F(u) \int_0^\infty (1 - F(u + y)) dy - (1 - F(u)) \int_0^\infty F(u - y) dy = 0. \\
\implies\quad & \int_0^\infty \Big(F(u)(1 - F(u + y)) - (1 - F(u)) F(u - y)\Big) dy = 0.\\
\implies\quad & \int_0^\infty G(y) dy = 0 ~\quad(say).
\end{align*}
If \( G(y) \geq 0 \) (or \( G(y) \leq 0 \)) for all \( y \geq 0 \), and the integral is zero, then it follows that \( G(y) = 0 \) for all \( y \geq 0 \). 
 Hence, we get
\begin{align}\label{lb}
  F(u)(1 - F(u + y)) &= (1 - F(u)) F(u - y), \quad u>0, ~y>0,   
\end{align}
for almost all $F(u) \in (0,1)$.
Following the technique of Theorem $2$ of \cite{ahmadi2020characterization}, we choose \( u = F^{-1}(1/2) \) (i.e., the point where \( F(u) = 1/2 \)), then (\ref{lb}) becomes
\begin{equation*} 
1 - F(u + y) = F(u - y), \quad y>0.  
\end{equation*}
This completes the proof $(ii)\implies (i)$. 
\end{proof}

We can define several entropy measures using (\ref{eq1}) and (\ref{eq2}), with different choices of $w(\cdot) $ and $ \phi(\cdot)$. For a detailed study, one can see \cite{kattumannil2022generalized} and the references therein. To construct the test for symmetry, we consider the weight function $w(u)=\Bar{F}(u)F(u)$ and $\phi(u)=u$.  
Then, by Theorem \ref{theorem1}, when $X$ has symmetric distributions if and only if  
\begin{equation}\label{char}
    \int_{-\infty}^{\infty} \Bar{F}(u)F(u)E\big[X-u | X \geq u\big]dF(u)= \int_{-\infty}^{\infty} \Bar{F}(u)F(u)E\big[u-X | X \leq u\big]dF(u).
\end{equation}

\subsection{Examples}
\noindent To illustrate the results obtained above, we have now explored a few examples.

\subsubsection*{1. Uniform distribution}
\noindent Let $X$  be a random variable following a uniform distribution on the interval $[-1,1]$ with the probability density function (pdf) and cumulative distribution function (cdf) given by 
  \begin{eqnarray*}
      f(x)= \dfrac{1}{2}, ~~
      \text{and}~~
      F(x)= \dfrac{x+1}{2}, ~~ -1\leq x \leq 1.
  \end{eqnarray*}
Hence
 \begin{eqnarray*}
      \int_{-1}^{1} \Bar{F}(u) F(u) E\big [ X-u|X \geq u \big] dF(u) &= &\int_{-1}^{1} F(u) \int_{u}^{1} (x-u) dF(x) dF(u) \\
                     &= &\int_{-1}^{1} \dfrac{(u+1)}{2} \int_{u}^{1} \dfrac{(x-u)}{4} dx du 
                     = \dfrac{1}{12},
\end{eqnarray*}
and
 \begin{eqnarray*}
\int_{-1}^{1} \Bar{F}(u)F(u) E\big[ u-X|X\leq u\big]dF(u)
                     &= &\int_{-1}^{1} \Bar{F}(u) \int_{-1}^{u} (u-x) dF(x) dF(u) \\
                     &= &\int_{-1}^{1} \dfrac{(1-u)}{2} \int_{-1}^{u} \dfrac{(u-x)}{4} dx du 
                     = \dfrac{1}{12}.\\
  \end{eqnarray*}

  \subsubsection*{2. Logistic distribution}
 \noindent The pdf and cdf of the logistic distribution is 
  \begin{eqnarray*}
      f(x)= \dfrac{e^{-x}}{(1+e^{-x})^2}, ~~
      \text{and}~~
      F(x)= \dfrac{1}{(1+e^{-x})}, ~~ -\infty \leq x \leq \infty.
  \end{eqnarray*}
One has
 \begin{eqnarray*}
\int_{-\infty}^{\infty} \Bar{F}(u) F(u) E\big [ X-u|X \geq u \big] dF(u)
      &=&\int_{-\infty}^{\infty} F(u)\int_{u}^{\infty}(x-u)dF(x) dF(u) \\
                     &= &\int_{-\infty}^{\infty} \int_{u}^{\infty} \dfrac{ (x-u)e^{-(x+u)}}{(1+e^{-x})^3(1+e^{-u})^2}dx du 
                     = \dfrac{3}{4}, 
 \end{eqnarray*}
 and
  \begin{eqnarray*}
 \int_{-\infty}^{\infty} \Bar{F}(u)F(u) E\big[ u-X|X\leq u\big]dF(u) 
    &=&\int_{-\infty}^{\infty} \Bar{F}(u) \int_{-\infty}^{u} (u-x) dF(x) dF(u) \\
                    &= &\int_{-\infty}^{\infty} \int_{-\infty}^{u}  \dfrac{(u-x)e^{-(2x+u)}}{(1+e^{-x})^3(1+e^{-u})^2}dx du 
                     = \dfrac{3}{4}.
  \end{eqnarray*}

\subsubsection*{3. Exponential distribution }
\noindent Let $X$ follow an exponential distribution with parameter $\lambda= 1$. Then 
\begin{eqnarray*}
     f(x)= e^{-x}, ~~
      \text{and}~~
      F(x)= 1-e^{-x}, ~~ x>0 .
\end{eqnarray*}
Hence
\begin{eqnarray*}
    \int_{0}^{\infty} \Bar{F}(u) F(u) E\big [ X-u|X \geq u \big] dF(u) 
      &=&\int_{0}^{\infty} F(u)\int_{u}^{\infty}(x-u)dF(x) dF(u) \\
                     &= &\int_{0}^{\infty} \int_{u}^{\infty} (x-u) (1-e^{-x}) e^{-(x+u)} dx du 
                     = \dfrac{5}{12}, 
 \end{eqnarray*} 
 and
 \begin{eqnarray*}
  \int_{0}^{\infty} \Bar{F}(u)F(u) E\big[ u-X|X\leq u\big]dF(u) 
    &=&\int_{0}^{\infty} \Bar{F}(u) \int_{0}^{u} (u-x) dF(x) dF(u) \\
                    &= &\int_{0}^{\infty} \Bar{F}(u) \int_{0}^{u} (u-x)e^{-(2x+u)} dF(x) dF(u) 
                     = \dfrac{1}{3}. 
  \end{eqnarray*}
We note that the integrals in (\ref{char}) are identical for the uniform and logistic distributions.
However, in the case of the exponential distribution, it is not equal, reflecting the inherent asymmetry of the distribution.

\section{Test for symmetry}\label{sec3}
\noindent In this section, we propose a non-parametric test to assess whether the underlying distribution of $X$ is symmetric. Let $\mathcal{F_{S}}$ = \{$F$: $F$ is a class of symmetric distributions\}. Based on random samples $X_1, X_2, \ldots, X_n$ from an unknown distribution function $F$, we are interested in testing the null hypothesis
$$H_0: F \in \mathcal{F_{S}} ~~ \text{against the alternative}~~ H_1: F \notin \mathcal{F_{S}}.$$  
Using (\ref{char}), we now introduce the departure measure: 
\begin{align}\label{eq.1}
    \Delta
    & =\int_{-\infty}^{\infty} \overline{F}(u)F(u)E[X-u | X \geq u]dF(u) - \int_{-\infty}^{\infty} \overline{F}(u)F(u)E[u-X | X \leq u]dF(u) \nonumber\\
   &= \Delta_1 - \Delta_2 \quad (say).  
\end{align}

In view of Theorem \ref{theorem1}, the departure measure $\Delta$ is zero under the null hypothesis and non-zero under the alternative hypothesis.  
For finding a test statistic, we simplify the departure measure $\Delta$ as   
 \begin{align}\label{delta1}
    \Delta_1 &= \int_{-\infty}^{\infty} F(u)\Bar{F}(u) E\big[X- u|X \geq u\big]dF(u) \nonumber\\
    &= \int_{-\infty}^{\infty}\int_{u}^{\infty} (x-u) F(u)dF(x)dF(u) \nonumber\\
    &= \int_{-\infty}^{\infty}\int_{u}^{\infty}x F(u)dF(x)dF(u)- \int_{-\infty}^{\infty}\int_{u}^{\infty} u F(u)dF(x)dF(u) \nonumber\\ 
    &= \int_{-\infty}^{\infty}\int_{-\infty}^{x}x F(u)dF(u)dF(x)- \int_{-\infty}^{\infty} u  F(u)\Bar{F}(u)dF(u) \nonumber\\ 
    &= \int_{-\infty}^{\infty}  \dfrac{xF^2(x)}{2}dF(x)- \int_{-\infty}^{\infty} u  F(u)(1-F(u))dF(u) \nonumber\\
    &= \dfrac{3}{2}\int_{-\infty}^{\infty}xF^2(x)dF(x)- \int_{-\infty}^{\infty} u  F(u)dF(u) \nonumber\\
     &= \dfrac{1}{2}E\big[\max(X_1, X_2, X_3)- \max(X_1, X_2)\big],
\end{align}   
and
\begin{align}\label{delta2}
    \Delta_2 &=\int_{-\infty}^{\infty} F(u)\Bar{F}(u) E\big[u-X|X\leq u\big]dF(u) \nonumber\\
     &= \int_{-\infty}^{\infty}\int_{-\infty}^{u} (u-x) \Bar{F}(u)dF(x)dF(u) \nonumber\\
    &= \int_{-\infty}^{\infty}\int_{-\infty}^{u} u \Bar{F}(u)dF(x)dF(u)- \int_{-\infty}^{\infty}\int_{-\infty}^{u} x \Bar{F}(u)dF(x)dF(u) \nonumber\\ 
    &= \int_{-\infty}^{\infty} u \Bar{F}(u) {F}(u) dF(u)- \int_{-\infty}^{\infty}\int_{x}^{\infty} x \Bar{F}(u)dF(u)dF(x) \nonumber\\ 
    &= \int_{-\infty}^{\infty} u  \Bar{F}(u)(1-\Bar{F}(u))dF(u)- \int_{-\infty}^{\infty}  \dfrac{x\Bar{F}^2(x)}{2}dF(x) \nonumber\\
    &= \int_{-\infty}^{\infty} u  \Bar{F}(u)dF(u)-\dfrac{3}{2}\int_{-\infty}^{\infty}x \Bar{F}^2(x)dF(x) \nonumber\\
     &= \dfrac{1}{2}E\big[\min(X_1, X_2) - \min(X_1, X_2, X_3)\big].
\end{align}    
Substituting $(\ref{delta1})$ and $(\ref{delta2})$ in (\ref{eq.1}), we obtain 
\begin{align}\label{delta}
    \Delta &= \dfrac{1}{2}E\big[\max(X_1, X_2, X_3)+\min(X_1, X_2, X_3)- \max(X_1, X_2)-\min(X_1, X_2)\big]. 
\end{align}
Now, one can simplify the above equation as  
\begin{eqnarray}\label{max}
    E(\max(X_1, X_2)+\min(X_1, X_2) )&=& \int_{-\infty}^{\infty} 2xF(x) dF(x) + \int_{-\infty}^{\infty} 2x\Bar{F}(x) dF(x)\nonumber\\
    &=& \int_{-\infty}^{\infty} 2x(F(x)+\Bar{F}(x))dF(x)\nonumber\\
    &=& \int_{-\infty}^{\infty} 2xdF(x) = 2E(X_1).
\end{eqnarray}
Hence (\ref{delta}) becomes 
\begin{align*}\label{deltafinal}
    \Delta &= \dfrac{1}{2}E\big[\max(X_1, X_2, X_3)+\min(X_1, X_2, X_3)-2 X_1\big]. 
\end{align*}
Consider a kernel 
\begin{eqnarray*}
    h^*(X_{1},X_{2}, X_{3}) &=& \dfrac{1}{2} \big( \max(X_1, X_2, X_3)+\min(X_1, X_2, X_3)\big)-X_1,
\end{eqnarray*}
such that $E(h^*(X_{1},X_{2}, X_{3}))= \Delta$.

Hence, the $U$-statistics based test for testing the symmetry is given by
     \begin{equation} \label{test statistic}
        \widehat{\Delta}_n = \frac{1}{\binom{n}{3}}\sum_{i=1 }^{n} \sum_{j=1; j < i }^{n}\sum_{k=1; k < j }^{n}  h(X_{i},X_{j}, X_{k}) 
     \end{equation}
     with 
     \begin{align}\label{h}
        h(X_{1},X_{2}, X_{3})& = \dfrac{1}{3}\bigg(\dfrac{3\max(X_1, X_2, X_3)+3\min(X_1, X_2, X_3)}{2}- (X_1+X_2+X_3)\bigg)
     \end{align}
     is the associated symmetric kernel of $h^*(X_{1},X_{2}, X_{3})$. Next, we express the proposed test statistic $\widehat{\Delta}_n$ in a simple form. Let $X_{(i)}$, $i=1,2,\ldots,n$ be the $i$-th order statistics based on a random sample of size $n$ from $F$. Using $X_{(i)}$, we have the following expressions 
\begin{eqnarray*}
    \sum_{i=1 }^{n} \sum_{j=1; j < i }^{n}\sum_{k=1; k < j }^{n}\max(X_i, X_j, X_k)&=& \dfrac{1}{2}\sum_{i=1}^{n} (i-1)(i-2)X_{(i)}, 
\end{eqnarray*}
and
\begin{eqnarray*}
    \sum_{i=1 }^{n} \sum_{j=1; j < i }^{n}\sum_{k=1; k < j }^{n}\min(X_i, X_j, X_k) &=& \dfrac{1}{2}\sum_{i=1}^{n} (n-i-1)(n-i)X_{(i)}.
\end{eqnarray*}

Hence, in terms of order statistics, the test statistic can be written as 
 \begin{small}
\begin{eqnarray*}
    \widehat{\Delta}_n&=& \dfrac{6}{n(n-1)(n-2)}\Bigg(\dfrac{3}{6\times 2}\sum_{i=1 }^{n} (i-1)(i-2)X_{(i)} +\dfrac{3}{6\times 2}\sum_{i=1 }^{n} (n-i-1)(n-i)X_{(i)}\Bigg) -\dfrac{1}{n}\sum_{i=1 }^{n} X_{(i)}\\
    &=& \dfrac{3}{2n(n-1)(n-2)}\Bigg(\sum_{i=1 }^{n} (i^2-3i+2)X_{(i)} +\sum_{i=1 }^{n} (n^2-2ni+i^2-n+i)X_{(i)} \Bigg) -\dfrac{1}{n}\sum_{i=1 }^{n} X_{(i)}\\
    &=&\dfrac{3}{2n(n-1)(n-2)}\Bigg(\sum_{i=1 }^{n} (n^2-2ni-n+2i^2-2i+2)X_{(i)}\Bigg)-\dfrac{1}{n}\sum_{i=1 }^{n} X_{(i)}\\
    &=&\dfrac{3}{2n(n-1)(n-2)}\Bigg(\sum_{i=1 }^{n} \Big(n(n-1)-2\big(i(n+1-i)-1\big)\Big)X_{(i)}\Bigg)-\dfrac{1}{n}\sum_{i=1 }^{n} X_{(i)}\\
    &=&\dfrac{3}{2n(n-2)}\sum_{i=1 }^{n} nX_{(i)}-\dfrac{2(n-2)}{2n(n-2)}\sum_{i=1 }^{n} X_{(i)}-\dfrac{3}{n(n-1)(n-2)}\sum_{i=1 }^{n} \big(i(n+1-i)-1\big)X_{(i)}\\
    &=&\dfrac{(3n-2n+4)}{2n(n-2)}\sum_{i=1 }^{n} X_{(i)}-\dfrac{3}{n(n-1)(n-2)}\sum_{i=1 }^{n} \big(i(n+1-i)-1\big)X_{(i)},
    \end{eqnarray*}
    \end{small}
which simplifies to
    \begin{eqnarray*}
    \widehat{\Delta}_n &=&\dfrac{(n+4)}{2n(n-2)}\sum_{i=1 }^{n} X_{(i)}-\dfrac{3}{n(n-1)(n-2)}\sum_{i=1 }^{n} \big(i(n+1-i)-1\big)X_{(i)}. 
    \end{eqnarray*}    

Next, by applying the asymptotic theory of $U$-statistics (refer to \cite{lee2019u}, Theorem $1$, page $76$), we derive the asymptotic distribution of $\widehat{\Delta}_n$.
\begin{theorem}\label{thm2}
    As $n \rightarrow \infty$,  $\sqrt{n}(\widehat{\Delta}_n-\Delta)$ converges in distribution to a normal random variable with mean zero and variance $\sigma^2= Var\big(K(X)\big)$, where 
\begin{align*}
       K(x) &=\bigg( xF(x)\big(F(x)-1\big)-\frac{x}{2}-  \int_{-\infty}^{x}y\big(1-2F(y)\big)dF(y)\bigg).  
     \end{align*}    
\end{theorem}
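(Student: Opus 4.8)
The plan is to recognise $\widehat{\Delta}_n$ as a $U$-statistic of degree $m=3$ with the symmetric kernel $h$ of \eqref{h}, whose expectation is exactly $\Delta$ (this is precisely what the reduction leading to \eqref{delta} establishes), so that $\widehat{\Delta}_n$ is an unbiased estimator of $\Delta$. Under the moment condition $E[X^2]<\infty$ (needed to guarantee a finite limiting variance), I would invoke the classical Hoeffding central limit theorem for $U$-statistics in exactly the form cited, \cite{lee2019u}, Theorem $1$, page $76$: provided the first-order projection is non-degenerate, $\sqrt{n}(\widehat{\Delta}_n-\Delta)$ converges in distribution to $N(0,\,m^2\zeta_1)$, where $\zeta_1=\mathrm{Var}(h_1(X_1))$ and $h_1(x)=E[h(x,X_2,X_3)]-\Delta$ is the first projection of the kernel. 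Since $m=3$, the target variance is $\sigma^2=9\,\mathrm{Var}(h_1(X))$, so the problem reduces to computing $h_1$ in closed form and identifying $3h_1(x)$, up to an additive constant (constants and an overall sign are irrelevant for the variance), with the displayed function $K(x)$.

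The substantive step is therefore the explicit evaluation of
\[
h_1(x)=\tfrac12\Big(E[\max(x,X_2,X_3)]+E[\min(x,X_2,X_3)]\Big)-\tfrac13\big(x+2\mu\big)-\Delta,\qquad \mu=E[X].
\]
To carry this out I would condition on whether $x$ is an extreme value of the triple: writing $M=\max(X_2,X_3)$ and $N=\min(X_2,X_3)$, which have distribution function $F^2$ and survival function $\bar{F}^2$, one obtains $E[\max(x,X_2,X_3)]=xF(x)^2+2\int_x^\infty yF(y)\,dF(y)$ and $E[\min(x,X_2,X_3)]=x\bar{F}(x)^2+2\int_{-\infty}^x y\bar{F}(y)\,dF(y)$. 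Summing these, using the identity $F^2+\bar{F}^2=1-2F\bar{F}$ to simplify the coefficient of $x$, and rewriting $\int_x^\infty yF\,dF$ as a constant minus $\int_{-\infty}^x yF\,dF$, collapses the two tail integrals into a single term $\int_{-\infty}^x y\big(1-2F(y)\big)\,dF(y)$. Collecting terms then yields a closed-form expression built from exactly the three pieces appearing in $K(x)$, namely $xF(x)\big(F(x)-1\big)$, a linear term in $x$, and $\int_{-\infty}^x y(1-2F(y))\,dF(y)$; substituting $3h_1(x)=K(x)$ (modulo an additive constant) into the Hoeffding limit gives $\sigma^2=\mathrm{Var}(K(X))$ and proves the theorem. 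An integration by parts, $\int_{-\infty}^x y(1-2F)\,dF = xF(x)\bar{F}(x)-\int_{-\infty}^x F\bar{F}\,dy$, can be used as a cross-check and to recover the simplest equivalent representation.

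I expect the principal obstacle to be purely computational: deriving the conditional-expectation formulas for $E[\max(x,X_2,X_3)]$ and $E[\min(x,X_2,X_3)]$ correctly, controlling the boundary terms at $\pm\infty$ (this is where the finite-mean and second-moment hypotheses enter, guaranteeing that the integrated-out endpoint contributions vanish), and performing the bookkeeping that folds the $\int_x^\infty$ contribution into the $\int_{-\infty}^x$ term so that the projection matches the stated $K$. A secondary but logically necessary point is to verify non-degeneracy, i.e.\ $\zeta_1=\mathrm{Var}(h_1(X))>0$, which legitimises the use of the first-order Hoeffding expansion and ensures the limit is a genuine non-degenerate Gaussian; this should be confirmed rather than assumed, since at a symmetric $F$ the mean $\Delta$ vanishes while the projection variance need not.
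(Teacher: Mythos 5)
Your proposal is correct and follows essentially the same route as the paper's proof: identify $\widehat{\Delta}_n$ as a degree-3 $U$-statistic, invoke the Hoeffding central limit theorem so that the asymptotic variance is $9\,\mathrm{Var}(h_1(X))$, and compute the projection $E[h(x,X_2,X_3)]$ by writing $\max(x,X_2,X_3)$ and $\min(x,X_2,X_3)$ in terms of the max and min of the pair $(X_2,X_3)$ (with cdf $F^2$ and survival $\bar F^2$) and folding the tail integral into $\int_{-\infty}^{x}y(1-2F(y))\,dF(y)$. Your additional insistence on verifying non-degeneracy of the first projection is a point the paper passes over silently, and is a worthwhile addition.
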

\begin{proof}
Since $\widehat{\Delta}_n$ is a $U$-statistic, according to the central limit theorem of $U$-statistics, we have the asymptotic normality of $\sqrt{n}(\widehat{\Delta}_n-\Delta)$ and the asymptotic variance is $9\sigma_1^2$, where $\sigma_1^2$ is given by
\begin{equation}\label{var}
    \sigma_1^2= Var\big(E(h(X_1,X_2,X_3)|X_1)\big),
\end{equation}
where the symmetric kernel function $h(\cdot)$ is given by (\ref{h}). Now, consider   
\begin{align*}
    E(h(X_1,X_2,X_3)|X_1) &= \dfrac{1}{2}E\Big( \max(x, X_2, X_3)+\min(x, X_2, X_3)\Big)-\dfrac{1}{3} E(x+X_2+X_3). 
\end{align*}
Let $Z=\max(X_2,X_3)$, and $W=\min(X_2, X_3)$ then
\begin{align*}
    E(\max(x, X_2, X_3)) &= E(xI(Z<x)+ZI(Z>x))\\&= xF^2(x)+2\int_{x}^{\infty}yF(y)dF(y),\\
    E(\min(x, X_2, X_3)) &= E(xI(W>x)+WI(W<x))\\&= x(1-F(x))^2+2\int_{-\infty}^{x}y\Bar{F}(y)dF(y).
\end{align*}    
  Hence 
\begin{align*}
    &E(\max(x, X_2, X_3)+\min(x, X_2, X_3))\\
    &= xF^2(x)+ x-2xF(x)+xF^2(x) + 2\int_{x}^{\infty}yF(y)dF(y) +2\int_{-\infty}^{x}y\Bar{F}(y)dF(y)\\
    &=2xF^2(x)+ x-2xF(x) +2\int_{-\infty}^{x}y{F}(y)dF(y) + 2\int_{x}^{\infty}yF(y)dF(y)  \\
    &\hspace{0.5cm}-2\int_{-\infty}^{x}y{F}(y)dF(y) +2\int_{-\infty}^{x}y(1-{F}(y))dF(y)\\
    &= 2xF^2(x)+ x-2xF(x)+ 2\int_{-\infty}^{\infty}y{F}(y)dF(y)- 4\int_{-\infty}^{x}y{F}(y)dF(y) +2\int_{-\infty}^{x}ydF(y), 
\end{align*}  
and 
\begin{equation*}
                 E(x+X_2+X_3) = x+2\int_{-\infty}^{\infty}ydF(y) . 
\end{equation*}
Substituting the above expressions in $(\ref{var})$, we obtain the variance expression given in the theorem.  
\end{proof}
Note that ${\Delta} = 0$ under the null hypothesis $H_0$.  Hence, we obtain the asymptotic null distribution of the test statistic. 
\begin{corollary}\label{cor1}
    Under $H_0$, as $n \rightarrow \infty$, $\sqrt{n}\widehat{\Delta}_n$ converges in distribution to a normal random variable with mean zero and variance $\sigma_{0}^2=Var(K_0(X))$, where $\sigma_{0}^2$ is the value of $\sigma^2$ evaluated under the null hypothesis.
\end{corollary}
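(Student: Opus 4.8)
The plan is to obtain Corollary \ref{cor1} as an immediate specialization of Theorem \ref{thm2}; essentially no new probabilistic argument is required, and the only content is the bookkeeping of the centering constant and the variance.

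First I would establish that $\Delta = 0$ under $H_0$. Since $H_0: F \in \mathcal{F_{S}}$ asserts that $F$ is symmetric, Theorem \ref{theorem1} gives $\mathcal{GCRE}(X) = \mathcal{GCE}(X)$, so that the departure measure $\Delta = \Delta_1 - \Delta_2$ defined in (\ref{eq.1}) vanishes. This is precisely the remark recorded just before the corollary.

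Next I would invoke Theorem \ref{thm2}, which states that $\sqrt{n}(\widehat{\Delta}_n - \Delta)$ converges in distribution to a mean-zero normal random variable with variance $\sigma^2 = Var(K(X))$. Substituting $\Delta = 0$ yields
\[
\sqrt{n}\,\widehat{\Delta}_n = \sqrt{n}(\widehat{\Delta}_n - \Delta) \xrightarrow{d} N(0, \sigma^2),
\]
so the limiting law is already determined and no further asymptotic work is needed.

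Finally I would identify the null variance. The function $K$, and hence $\sigma^2 = Var(K(X))$, depends on the underlying law only through $F$; evaluating $K$ at a symmetric $F$ produces the null version $K_0$, and the associated variance is $\sigma_0^2 = Var(K_0(X))$, as claimed. If a more explicit form were desired, I would simplify $K_0$ using the symmetry identity $\bar{F}(u+x) = F(u-x)$ from (\ref{sym}) to collapse the integral term, but this refinement is not needed for the statement. The only point requiring care --- rather than a genuine obstacle --- is to report the variance as $\sigma^2$ evaluated under the symmetric $F$, namely $\sigma_0^2$, and not its general value.
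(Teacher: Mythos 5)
Your proposal is correct and matches the paper's (implicit) argument exactly: the paper simply notes that $\Delta=0$ under $H_0$ by Theorem \ref{theorem1} and then reads off the null limit from Theorem \ref{thm2}, with the variance $\sigma_0^2$ being $\sigma^2$ evaluated at the symmetric $F$. No further commentary is needed.
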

Let $\widehat{\sigma}_{0}^2$ be a consistent estimator of the null variance $\sigma_0^2$. Using the asymptotic distribution, we can establish testing criteria based on the normal distribution. The null hypothesis $H_0$ is rejected in favor of the alternative hypothesis $H_1$ at a significance level $\alpha$, if
\begin{equation*}
    \frac{\sqrt{n}\lvert \widehat{\Delta}_n\lvert}{\widehat{\sigma}_0} > Z_{\alpha/2},
\end{equation*}
where, $Z_{\alpha}$ represents the upper $\alpha$-percentile point of a standard normal distribution. Implementing the normal-based test is challenging due to the difficulty in finding $\widehat{\sigma}_0$. Hence, we find the critical region of
the proposed test using a Monte Carlo simulation. We used the simulated critical region (SCR) technique to obtain critical points and perform the test to avoid relying on asymptotic critical values. The values of the lower quantile \( C_{1} \) and the upper quantile \( C_{2} \) are determined using the exact distribution in such a way that \( P(\widehat{\Delta}_n<C_{1})= P(\widehat{\Delta}_n>C_{2}) = \frac{\alpha}{2} \). The procedure to find the critical points in this study is outlined in Algorithm~1.

\begin{algorithm}  
\caption{Nonparametric Bootstrap Algorithm to Find $C_1$ and $C_2$}\label{alg:bootstrap}
\begin{algorithmic}
\State $x_i\gets \text{generate from null distribution}$
\State $n \gets \text{length}(x_i)$
\State $\widehat{\Delta}_n \gets \text{Calculate test statistic } \widehat{\Delta}_n(x)$
\State $B \gets 1000$ \Comment{Number of bootstrap replicates}
\State for $(b ~\text{from}~ 1~\text{to}~ B )$\{
\State $ i \gets$ sample($1:n$, size=$n$, replace=TRUE) 
\State  $ y \gets x[i]$
\State deltas[b] $\gets$ $\widehat{\Delta}_n$\}
\State $\text{deltas} \gets \text{sort}(\text{deltas})$
\State $C_1 \gets$ quantile(deltas, $\alpha/2$) \Comment{lower bound}
\State $C_2 \gets$ quantile(deltas, $1-\alpha/2$)  \Comment{upper bound}
\State ifelse(($\widehat{\Delta}_n$ $< C_1||$$\widehat{\Delta}_n$ $> C_2$), print(``Reject $H_0$"), print(``Accept $H_0$"))
\end{algorithmic}
\end{algorithm}

Consequently, we developed the distribution-free test known as the JEL ratio test to assess the distributional symmetry.  The JEL transforms the test statistic of interest into a sample mean using jackknife pseudo-values, proving particularly effective in addressing tests based on $U$-statistics (\cite{peng2018jackknife} and \cite{garg2024jackknife}). This approach is most suitable for dealing with the testing problem associated with a class of distributions where the null hypothesis is not completely specified. For recent developments related to this area, one can see \cite{kattumannil2024jackknife} and \cite{avhad2025}. Motivated by this, we formulate JEL and AJEL ratio tests in the following section.  

\subsection{ Jackknife empirical likelihood ratio test}
\noindent To construct the JEL ratio test, we initially define the jackknife pseudo values utilized in empirical likelihood. The jackknife pseudo values denoted by $\mathcal{V}_i,\hspace{0.2cm} i=1,\ldots, n$ are defined as
\begin{equation}\label{vi}
    \widehat{\mathcal{V}}_i= n\widehat{\Delta}_n - (n-1)\widehat{\Delta}^{(-i)}_{n-1}, \quad i= 1,\ldots, n. 
\end{equation}
In this context, the value of $\widehat{\Delta}_i$ is derived from (\ref{test statistic}) by excluding the $i$-th observation from the sample $X_1,\ldots, X_n$ and the jackknife estimator $\widehat{\Delta}_{jack}=\dfrac{1}{n}\sum\limits_{i=1}^n\widehat{\mathcal{V}}_i$. 
Let $p_i$ be the probability associated with $\widehat{\mathcal{V}}_i$ such that $\sum\limits_{i=1}^{n}p_i=1$ and $p_i \geq 0 $ for $1\leq i \leq n$. 
Then, the JEL based on $\Delta$ is defined as
\begin{equation}\label{eq.13}
    \mathcal{L}= \max \Bigg\{ \prod_{i=1}^n p_i: \sum_{i=1}^{n}p_i=1, p_i\geq 0,   \sum_{i=1}^{n} p_i\widehat{\mathcal{V}}_i=0  \Bigg \}. 
\end{equation}
We know that $\prod\limits_{i=1}^np_i$, constrained by $\sum\limits_{i=1}^{n}p_i=1$, reaches its maximum value of ${1}/{n^n}$ when each $p_i= \dfrac{1}{n}$.
Thus, the JEL ratio for $\Delta$ is given by
\begin{equation}\label{R}
    \mathcal{R}= \dfrac{\mathcal{L}}{n^{-n}}= \max \bigg\{ \prod_{i=1}^n np_i: \sum_{i=1}^n p_i=1, p_i\geq 0, \sum_{i=1}^{n} p_i\widehat{\mathcal{V}}_i=0 \bigg\}.
\end{equation}
Using Lagrange multipliers, whenever
  \begin{equation}\label{LM}
      \min_{1\leq k \leq n} \widehat{\mathcal{V}}_i^k < \widehat{\Delta}_{jack}< \max_{1\leq k \leq n} \widehat{\mathcal{V}}_i^k, 
  \end{equation}
  we have $p_i= \frac{1}{n}\frac{1}{1+\lambda_1 \widehat{\mathcal{V}}_i}$,  and $\lambda_1$ satisfies  $\frac{1}{n} \sum\limits_{i=1}^n \frac{\widehat{\mathcal{V}}_i}{1+\lambda_1\widehat{\mathcal{V}}_i}=0$. 

 Substituting the value of $p_i$ in $(\ref{R})$ and taking the logarithm of $\mathcal{R}$, we get the non-parametric jackknife empirical log-likelihood ratio as
 \begin{equation*}
     \log \mathcal{R}= -\sum_{i=1}^n \text{log}\{1+\lambda_1\widehat{\mathcal{V}}_i\}.
 \end{equation*}
 
For large values of $\log\mathcal{R}$, we reject the null hypothesis $H_0$ in favour of the alternative hypothesis $H_1$. To determine the critical region for the JEL-based test, we ascertain the limiting distribution of the jackknife empirical log-likelihood ratio.
 The following regularity conditions, as outlined by \cite{gastwirth1974large}, are required to explain how Wilk’s theorem works:   \\
($A1$) The random variable $X$ possesses a finite mean $\mu$ and variance $\sigma^2$, and \\
($A2$) The probability density function of $X$ is continuous in the neighbourhood of $\mu$.

Using Theorem $1$ from \cite{jing2009jackknife}, we have established the asymptotic null distribution of the jackknife empirical log-likelihood ratio as an analogue to Wilks' theorem. 

\begin{theorem}\label{thm3}
Assume that $A1$ and $A2$ hold. Then as $n\to \infty$, $-2 \log \mathcal{R}$ converges to $\chi^2_1$ in distribution.
\end{theorem}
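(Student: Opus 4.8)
The plan is to follow the standard route to Wilks' theorem for jackknife empirical likelihood laid out in \cite{jing2009jackknife}: reduce $-2\log\mathcal{R}$ to a studentized version of the jackknife estimator and then invoke the asymptotic normality already established in Corollary~\ref{cor1}. The starting observation is that, because $\widehat{\Delta}_n$ is a $U$-statistic, the average of the pseudo-values coincides with the statistic itself. Using the identity $\frac{1}{n}\sum_{i=1}^n \widehat{\Delta}^{(-i)}_{n-1} = \widehat{\Delta}_n$, valid for $U$-statistics, one gets $\overline{\mathcal{V}} := \frac{1}{n}\sum_{i=1}^n \widehat{\mathcal{V}}_i = \widehat{\Delta}_{jack} = \widehat{\Delta}_n$. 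Hence, under $H_0$, Corollary~\ref{cor1} already supplies $\sqrt{n}\,\overline{\mathcal{V}} = \sqrt{n}\,\widehat{\Delta}_n \xrightarrow{d} N(0,\sigma_0^2)$.

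First I would record the moment and uniform-bound conditions on the pseudo-values that drive the expansion. Writing $S_n^2 = \frac{1}{n}\sum_{i=1}^n(\widehat{\mathcal{V}}_i - \overline{\mathcal{V}})^2$ for their sample variance, the two facts to establish are (a) $S_n^2 \xrightarrow{p} \sigma_0^2$ and (b) $\max_{1\le i \le n}|\widehat{\mathcal{V}}_i| = o_p(n^{1/2})$. Both follow from the Hoeffding decomposition of the degree-three $U$-statistic $\widehat{\Delta}_n$ together with conditions $A1$--$A2$, exactly as in \cite{jing2009jackknife}. Concretely, each pseudo-value admits the linearization $\widehat{\mathcal{V}}_i \approx \Delta + 3\big(E[h(X_i,X_2,X_3)\mid X_i] - \Delta\big)$, whose dominant term has variance $9\sigma_1^2 = \sigma_0^2$ under $H_0$ by Theorem~\ref{thm2} and Corollary~\ref{cor1}; the remaining terms of the decomposition are asymptotically negligible, so the pseudo-values behave like i.i.d. summands with the correct variance.

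Next I would carry out the usual Lagrange-multiplier analysis. From the constraint $\frac{1}{n}\sum_{i=1}^n \widehat{\mathcal{V}}_i/(1+\lambda_1\widehat{\mathcal{V}}_i)=0$, combining (b) with $\overline{\mathcal{V}} = O_p(n^{-1/2})$ and $S_n^2 = O_p(1)$ yields $\lambda_1 = O_p(n^{-1/2})$, and a one-term Taylor expansion of the same equation refines this to $\lambda_1 = \overline{\mathcal{V}}/S_n^2 + o_p(n^{-1/2})$. Substituting into $\log\mathcal{R} = -\sum_{i=1}^n \log(1+\lambda_1\widehat{\mathcal{V}}_i)$ and expanding the logarithm to second order, the linear contribution cancels against the constraint and one is left with
\begin{equation*}
   -2\log\mathcal{R} = \frac{n\,\overline{\mathcal{V}}^2}{S_n^2} + o_p(1) = \frac{\big(\sqrt{n}\,\widehat{\Delta}_n\big)^2}{S_n^2} + o_p(1).
\end{equation*}
Applying $\sqrt{n}\,\widehat{\Delta}_n \xrightarrow{d} N(0,\sigma_0^2)$ from Corollary~\ref{cor1} together with $S_n^2\xrightarrow{p}\sigma_0^2$ and Slutsky's theorem then gives $-2\log\mathcal{R} \xrightarrow{d}\chi^2_1$, as claimed.

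I expect the main obstacle to lie in parts (a) and (b): since the jackknife pseudo-values are dependent, one cannot appeal directly to the i.i.d.\ empirical-likelihood arguments, and must instead show via the Hoeffding decomposition that the dependence is asymptotically negligible and that $S_n^2$ consistently recovers the correct $U$-statistic variance $\sigma_0^2$ rather than some other quantity. The uniform bound $\max_i|\widehat{\mathcal{V}}_i|=o_p(n^{1/2})$ also requires a moment bound on the kernel $h$, which is precisely where condition $A1$ (finiteness of $\mu$ and $\sigma^2$) enters, while the continuity of the density near $\mu$ in $A2$ secures the regularity needed to justify the Taylor expansion of the constraint equation.
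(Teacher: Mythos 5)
Your proposal is correct and follows essentially the same route as the paper: both reduce $-2\log\mathcal{R}$ to the studentized quantity $n\widehat{\Delta}_{jack}^2/\widehat{\sigma}^2+o_p(1)$ via the Lagrange-multiplier expansion of Jing et al.\ (2009) and Zhao et al.\ (2015), and then invoke the $U$-statistic central limit theorem. The only cosmetic differences are that you use the centered sample variance of the pseudo-values where the paper uses the uncentered second moment (asymptotically equivalent since $\overline{\mathcal{V}}=O_p(n^{-1/2})$ under $H_0$), and you spell out the Hoeffding-decomposition justification that the paper delegates to Lemma A1 of Jing et al.
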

\begin{proof}
  The proof of this theorem can be established by following the argument of \cite{zhao2015jackknife}. First we have $\frac{1}{n}\sum_{i=1}^n\widehat{\mathcal{V}}_i^2=\widehat{\sigma}^2+o_p(n^{-1/2})$ almost surely. Thus
  \begin{align*}
     \lambda_1&=\Bigg\{\frac{1}{n}\sum_{i=1}^n\widehat{\mathcal{V}}_i \Big/\frac{1}{n}\sum_{i=1}^n\widehat{\mathcal{V}}_i^2 \Bigg\}+o_p(n^{-1/2})\\
     &=\dfrac{\widehat{\Delta}_{jack}}{\widehat{\sigma}^2} + o_p(n^{-1/2}).
  \end{align*}  
   Using Lemma $A1$ of Jing \emph{et al}. \cite{jing2009jackknife}, we have the condition (\ref{LM}) and $\mathbb{E}(Y^2)<\infty$  and $\sigma^2>0$ are also satisfied. 
   The Wilks' theorem of JEL can be established as
   \begin{align*}
       -2\log \mathcal{R} &= 2\sum_{i=1}^n \Bigg\{\lambda_1 \widehat{\mathcal{V}}_i-\frac{1}{2}\lambda_1^2 \widehat{\mathcal{V}}_i^2 \Bigg\}+ o_p(1)\\
       &= 2n{\lambda_1} \widehat{\Delta}_{jack}- n{\lambda_1}^2\widehat{\sigma}^2+o_p(1)\\
       &=\dfrac{ n\big\{ \widehat{\Delta}_{jack}\big\}^2 }{\widehat{\sigma}^{2}}+o_p(1)\\
       &\xrightarrow{d} \chi^2_1.
   \end{align*}
   Hence, the theorem.
\end{proof}
Hence we reject the null hypothesis $H_0$ in favour of the alternative hypothesis $H_1$ at a significance level $\alpha$, if
\begin{equation*}
    -2 \log \mathcal{R} > \chi_{1,\alpha}^2
\end{equation*}
where, $\chi_{1,\alpha}^2$ represents the upper $\alpha$-percentile point of the $\chi^2$ distribution with one degree of freedom.

\subsection{Adjusted jackknife empirical likelihood ratio test}
\noindent To overcome the convex hull restrictions inherent in the traditional JEL approach, \cite{chen2008adjusted} introduced the AJEL ratio test. Generally, the AJEL method improves upon the original JEL in terms of empirical power and coverage probability.
From the generated $\mathcal{V}_i$'s in (\ref{vi}), the extra data point is obtained by using the proposed convention; $\widehat{\mathcal{W}}_{n+1}= -k_n  \widehat{\Delta}_{jack}$, 
where $k_n$ is a positive constant. Then the AJEL is defined as,
\begin{equation*}
    \mathcal{L}^{ad}= \max \Bigg\{ \prod_{i=1}^{n+1} p_i: \sum_{i=1}^{n+1}p_i=1, p_i\geq 0,   \sum_{i=1}^{n+1} p_i\widehat{\mathcal{W}}_i=0  \Bigg \}, 
\end{equation*}
where 
 \begin{equation}\label{wi}
    \widehat{\mathcal{W}}_i= \begin{cases}
\widehat{\mathcal{V}}_i  \hspace{5.4cm}  \text{for}  ~ i=1,2,\ldots,n \\
-\max\Big(1,\dfrac{\log(n)}{2}\Big) \dfrac{1}{n} \sum\limits_{i=1}^{n} \widehat{\mathcal{V}}_i \hspace{1cm} ~~~ \text{for} ~ i=n+1.
\end{cases}
\end{equation} 
Therefore, we define the AJEL ratio as  
\begin{equation*}
     \mathcal{R}^{ad}=\dfrac{\mathcal{L}^{ad}}{(n+1)^{-(n+1)}}= \max \bigg\{ \prod_{i=1}^{n+1} (n+1)p_i: \sum_{i=1}^{n+1} p_i=1, p_i\geq 0, \sum_{i=1}^{n+1} p_i\widehat{\mathcal{W}}_i=0 \bigg\}.
\end{equation*}
Then, we obtain the AJEL log ratio,
\begin{equation}
    -2\log \mathcal{R}^{ad}=2\sum_{i=1}^{n+1}\log\{1+\lambda_2 \widehat{\mathcal{W}}_i\},
\end{equation}
where $\lambda_2$ satisfies the equation,
\begin{equation*}
    f(\lambda)= \dfrac{1}{n+1}\sum_{i=1}^{n+1} \dfrac{\widehat{\mathcal{W}}_i}{1+\lambda_2 \widehat{\mathcal{W}}_i}=0.
\end{equation*}
We can derive the following Wilks' theorem for the AJEL, which provides a foundational result in establishing the asymptotic behavior of the test statistic. 
\begin{theorem}\label{thm4}
    Under regularity conditions hold like Theorem \ref{thm3}. Under $H_0$, the $-2\log \mathcal{R}^{ad}$ converges in distribution to $\chi^2_1$ as $n\to \infty$. 
\end{theorem}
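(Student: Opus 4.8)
The plan is to prove the AJEL analogue of Wilks' theorem (Theorem \ref{thm4}) by reducing it to the already-established JEL version (Theorem \ref{thm3}), following the adjustment argument of \cite{chen2008adjusted}. The core idea is that the adjusted sample $\widehat{\mathcal{W}}_1,\ldots,\widehat{\mathcal{W}}_{n+1}$ differs from the original jackknife pseudo-values only by the single artificial point $\widehat{\mathcal{W}}_{n+1}=-\max(1,\log(n)/2)\frac{1}{n}\sum_{i=1}^n\widehat{\mathcal{V}}_i$, which is designed to force the origin strictly inside the convex hull of the $\widehat{\mathcal{W}}_i$, thereby guaranteeing that the constrained maximization defining $\mathcal{R}^{ad}$ always has a solution (eliminating the empty-set problem) while contributing asymptotically negligibly to the likelihood ratio.

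First I would record the elementary asymptotic facts already available: under $A1$ and $A2$, from the proof of Theorem \ref{thm3} we have $\frac{1}{n}\sum_{i=1}^n\widehat{\mathcal{V}}_i^2=\widehat{\sigma}^2+o_p(1)$ with $0<\sigma^2<\infty$, and $\widehat{\Delta}_{jack}=\frac{1}{n}\sum_{i=1}^n\widehat{\mathcal{V}}_i=O_p(n^{-1/2})$ under $H_0$. Next I would examine the added point: since $\widehat{\mathcal{W}}_{n+1}=-\max(1,\log(n)/2)\,\widehat{\Delta}_{jack}$, its magnitude is $O_p(n^{-1/2}\log n)=o_p(1)$, so $\widehat{\mathcal{W}}_{n+1}$ vanishes in probability and $\widehat{\mathcal{W}}_{n+1}^2=o_p(1)$. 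Consequently the adjusted moments satisfy $\frac{1}{n+1}\sum_{i=1}^{n+1}\widehat{\mathcal{W}}_i=\frac{n}{n+1}\widehat{\Delta}_{jack}(1-\frac{1}{n}\max(1,\log(n)/2))$, which is still $O_p(n^{-1/2})$, and $\frac{1}{n+1}\sum_{i=1}^{n+1}\widehat{\mathcal{W}}_i^2=\widehat{\sigma}^2+o_p(1)$. The adjusted sample therefore has the same leading-order first and second moments as the original one.

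With these moment estimates in hand, I would solve the Lagrange-multiplier equation $f(\lambda_2)=\frac{1}{n+1}\sum_{i=1}^{n+1}\widehat{\mathcal{W}}_i/(1+\lambda_2\widehat{\mathcal{W}}_i)=0$ by the standard expansion, obtaining $\lambda_2=\{\frac{1}{n+1}\sum\widehat{\mathcal{W}}_i\}/\{\frac{1}{n+1}\sum\widehat{\mathcal{W}}_i^2\}+o_p(n^{-1/2})=\widehat{\Delta}_{jack}/\widehat{\sigma}^2+o_p(n^{-1/2})$, exactly mirroring the derivation in Theorem \ref{thm3}. Substituting into the Taylor expansion of $-2\log\mathcal{R}^{ad}=2\sum_{i=1}^{n+1}\log\{1+\lambda_2\widehat{\mathcal{W}}_i\}$ and using $2\sum\log(1+\lambda_2\widehat{\mathcal{W}}_i)=2\lambda_2\sum\widehat{\mathcal{W}}_i-\lambda_2^2\sum\widehat{\mathcal{W}}_i^2+o_p(1)$ gives $-2\log\mathcal{R}^{ad}=n\{\widehat{\Delta}_{jack}\}^2/\widehat{\sigma}^2+o_p(1)$, the same quadratic form that converges to $\chi^2_1$ by the $U$-statistic central limit theorem underlying Theorem \ref{thm2}.

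The main obstacle is making rigorous the claim that the single adjustment point does not perturb the limiting distribution — specifically, controlling the extra $\log(1+\lambda_2\widehat{\mathcal{W}}_{n+1})$ term and the correction it induces in the moment sums. The key estimate is that $\lambda_2\widehat{\mathcal{W}}_{n+1}=O_p(n^{-1/2})\cdot o_p(1)=o_p(1)$ uniformly, so its contribution to both $f(\lambda_2)$ and the log-likelihood expansion is $o_p(1)$; one must also verify that the $\widehat{\mathcal{W}}_i$ satisfy the boundedness condition $\max_{1\le i\le n+1}|\widehat{\mathcal{W}}_i|=o_p(n^{1/2})$ needed to justify the Taylor expansion, which follows from Lemma $A1$ of \cite{jing2009jackknife} applied to the $\widehat{\mathcal{V}}_i$ together with the negligibility of the added point. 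Once these uniformity bounds are secured, the argument collapses onto the proof of Theorem \ref{thm3}, and the conclusion $-2\log\mathcal{R}^{ad}\xrightarrow{d}\chi^2_1$ follows.
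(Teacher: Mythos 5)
Your proposal is correct and follows essentially the same route as the paper's own proof: both reduce the AJEL statement to the JEL expansion by noting that the single adjustment point $\widehat{\mathcal{W}}_{n+1}$ is asymptotically negligible, establish $\lambda_2 = \widehat{\Delta}_{jack}/\widehat{\sigma}^2 + o_p(n^{-1/2})$, and Taylor-expand $-2\log\mathcal{R}^{ad}$ to obtain $n\{\widehat{\Delta}_{jack}\}^2/\widehat{\sigma}^{2}+o_p(1)\xrightarrow{d}\chi^2_1$. If anything, your write-up is more explicit than the paper's about why the added point does not perturb the moment sums, which the paper leaves implicit by citing Chen et al.\ (2008) and Zhao et al.\ (2015).
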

\begin{proof}
 By following the argument of \cite{chen2008adjusted} and \cite{zhao2015jackknife}, we have the adjusted jackknife pseudo value defined in (\ref{wi}). 
We have $|{\lambda}_2|={O}_p(1/\sqrt{n})$, when $k_n=o_p(n)$. The Wilks' theorem of AJEL can be derived as 
\begin{align*}
    -2 \log\mathcal{R}^{ad}&= \sum_{i=1}^{n+1} 2\log (1 +{\lambda}_2 \widehat{\mathcal{W}}_i)\\
    &= 2\sum_{i=1}^{n+1} \Big( {\lambda}_2 \widehat{\mathcal{W}}_i - \dfrac{({\lambda}_2 \widehat{\mathcal{W}}_i^2}{2} \Big) +o_p(1)\\
    &= 2n{\lambda}_2 \widehat{\Delta}_{jack}- n{\lambda}^2_2\widehat{\sigma}^2+o_p(1)\\
    &=\dfrac{ n\big\{ \widehat{\Delta}_{jack}\big\}^2 }{\widehat{\sigma}^{2}}+o_p(1). 
\end{align*}
 Hence by Slutsky's theorem, as $n\to \infty$,
\begin{equation*}
    -2 \log\mathcal{R}^{ad}=\dfrac{ n\big\{ \widehat{\Delta}_{jack}\big\}^2 }{\widehat{\sigma}^{2}} 
    \xrightarrow{d} \chi^2_1. 
\end{equation*} 
Hence, the theorem.
\end{proof}
From Theorem \ref{thm4},  we can then reject the null hypothesis $H_0$ at a $\alpha$ level of significance if
\begin{equation*}
    -2\log \mathcal{R}^{ad}> \chi^2_{1,\alpha}.
\end{equation*}

\section{Simulation study}\label{sec4}
\noindent In this section, we conducted an extensive Monte Carlo simulation study to examine the finite sample performance of the newly proposed tests against fixed alternatives; the computations were exclusively conducted utilizing the \texttt{R} programming language. In our simulation study, we employed the \texttt{R} package \( \texttt{symmetry} \) (a version of which is accessible on CRAN, see \cite{ivanovic2020symmetry}). The main objectives were to assess the empirical type-I error and empirical power of the proposed tests and compare their performance with existing tests for symmetry. Sample sizes of $25, 50, 100$ and $200$ were considered to evaluate how the sample size affects the performance of the tests. The simulation is repeated $10000$ times. To calculate the empirical type-I error of the tests, we considered various symmetric distributions such as standard normal $(N(0,1))$, standard Laplace $(Lap(0,1))$, standard lognormal $(Log(0,1))$, and a mixture of normal $(MxN(0.5))$. The critical values of the SCR approach are calculated using the generated samples from the considered symmetric distributions. The empirical type-I error calculated for both SCR, JEL and AJEL ratio tests is reported in Table \ref{table:size}. From Table \ref{table:size}, we observe that as the sample size increases, the empirical type-I error converges to the given significance level.  

\begin{table}[ht]
\centering
\caption{Empirical type-I rate at $0.05$ level of significance}
\label{table:size} 
    \resizebox{15.5 cm}{!}{
\fontsize{9pt}{11pt}\selectfont
\begin{tabular}{p{1.1cm} p{0.5cm} p{0.7cm}p{0.7cm} p{0.7cm} p{0.7cm} p{0.7cm} p{0.7cm}p{0.7cm} p{0.7cm} p{0.7cm} p{0.7cm} p{0.7cm} p{0.7cm}} 
\hline
        &$n$ &SCR &JEL&AJEL & MW & MS & BHI   & CM  &  MGG &   MOI  &  NAI  &B1&  SGN  \\[1ex]
\hline
N(0,1) & 25   & 0.047& 0.065& 0.067& 0.021& 0.040& 0.057 & 0.042 & 0.045 & 0.041  & 0.053 & 0.031& 0.043 \\[1ex]
        & 50  & 0.048& 0.062 &0.051 &0.038 &0.038 & 0.050 & 0.041 & 0.043 & 0.049  & 0.051 & 0.040 & 0.048 \\[1ex]
        & 100  & 0.050 & 0.053&0.054 & 0.044&0.053 & 0.051 & 0.045 & 0.047 & 0.051 & 0.049  &0.055 &0.052\\[1ex]
        & 200  & 0.052 & 0.051 & 0.049&0.049 & 0.051& 0.050 & 0.047 & 0.050 & 0.049  & 0.051 & 0.054&0.053 \\[3ex]

Lap(0,1) &  25   & 0.041& 0.056& 0.054&0.006 &0.051 & 0.057 & 0.032 & 0.053 &  0.039 & 0.054  & 0.107 &0.043\\[1ex]
        & 50   & 0.046& 0.061&0.058 &0.038 &0.058 & 0.053 & 0.038 & 0.056 &  0.047 & 0.053 & 0.087 & 0.058\\[1ex]
        & 100  & 0.050& 0.054 & 0.052&0.060 &0.067 & 0.052 & 0.035 & 0.058 &  0.045 & 0.050  & 0.079&0.057 \\[1ex]
        & 200  & 0.051& 0.052& 0.049& 0.048&0.053 & 0.050 & 0.041 & 0.051 &  0.051 &  0.049 &0.063 & 0.052\\[3ex]
        
Log(0,1) & 25   & 0.037& 0.063 &0.062& 0.004& 0.029& 0.057 & 0.036 & 0.046 & 0.031  & 0.038 &  0.053 &0.050\\[1ex]
        & 50   & 0.042& 0.066 &0.052& 0.020&0.025 & 0.054 & 0.038 & 0.044 &  0.049 & 0.049 & 0.047 & 0.052\\[1ex]
        & 100 & 0.051& 0.054  &0.048&0.038 &0.048 & 0.055 & 0.040 & 0.048 &  0.048 & 0.051 & 0.079& 0.049\\[1ex]
        & 200  & 0.052& 0.050 &0.051& 0.048& 0.050& 0.050 & 0.041 & 0.049 &  0.049 & 0.050  &0.063 & 0.050\\[3ex]
        
MxN(0.5) & 25   & 0.065& 0.062 &0.060& 0.004& 0.035& 0.060 & 0.042 & 0.048 & 0.042 & 0.055 & 0.027  &0.047\\[1ex]
      & 50   & 0.049& 0.056 &0.053&0.010 &0.039 & 0.055 & 0.041 & 0.044 &  0.049 & 0.056 &0.043  & 0.060\\[1ex]
        & 100  & 0.050& 0.054 &0.050&0.025 & 0.044& 0.048 & 0.043 & 0.042 &  0.050 & 0.054 &0.049 & 0.055\\[1ex]
        & 200  & 0.052& 0.051 &0.049& 0.036&0.049 & 0.050 & 0.043 & 0.050 &  0.049 & 0.052 & 0.046 & 0.053\\[1ex]
\hline
\end{tabular}}
\end{table}
 
Next, to calculate the empirical power of the proposed test, we used the different forms of skewed distributions with $\theta= 0.5 ~\text{and}~ 1$ as:
 \begin{itemize}
      \item Fernandez-Steel type distributions (see \cite{fernandez1998bayesian}) with density function
    \begin{eqnarray*}
        g(x; \theta) &=& f\Big(\dfrac{x}{1+\theta}\Big)I(x<0)+f\big((1+\theta)x\big)I(x\geq 0);
    \end{eqnarray*}
    
    \item Azzalini type distributions (see \cite{azzalini2013skew}) with density function
    \begin{eqnarray*}
        g(x; \theta)&=& 2f(x)F(\theta x);
    \end{eqnarray*}
\item Contamination (with shift) alternative with distribution function
    \begin{eqnarray*}
        G(x; \theta,\beta)&=&(1-\theta)F(x)+\theta F(x-\beta), ~~ \beta>1,~ \theta \in [0,1];
    \end{eqnarray*}
 \end{itemize}
 where $f$ and $g$ are density functions, $G$ and $F$ are the distribution functions. For the empirical power calculation, the critical values for the SCR test are calculated using the standard normal distribution. As these values are derived under the null hypothesis, the particular choice of symmetric distribution does not significantly affect the outcomes.

Several tests are available in the literature to test the symmetry of random variables based on various characterizations, and well-known classical tests exist. The subsequent tests are considered for comparative study:
\begin{itemize}
    \item MW: Modified signed rank Wilcoxon test (\cite{vexler2023implement}),
    \item MS: Modified sign test (\cite{vexler2023implement}),
    \item BHI: Statistic of the Litvinova test (\cite{litvinova2001new}),
    \item CM: Cabilio-Masaro test statistic (\cite{cabilio1996simple}),
    \item MGG: Miao, Gel and Gastwirth test statistic 
 (\cite{miao2006new}),
    \item MOI:  Milosevic and Obradovic test statistic (\cite{milovsevic2016characterization}),
    \item NAI: Statistic of the Nikitin and Ahsanullah test (\cite{nikitin2015new}),
    \item B1: Statistic of the test $\sqrt{b_1}$ (\cite{milovsevic2019comparison}),
    \item SGN: Sign test statistic (\cite{milovsevic2019comparison}).
\end{itemize}

\begin{landscape}
\begin{table}[ht]
\centering
\caption{Empirical powers of the tests under Fernandez-Steel type distribution }
\label{table:power1} 
     \resizebox{17.5 cm}{!}{
\fontsize{9pt}{10pt}\selectfont
\begin{tabular}{p{1.5cm} p{0.8cm} p{0.8cm} p{0.8cm} p{0.8cm} p{0.8cm} p{0.8cm} p{0.8cm}p{0.8cm} p{0.8cm} p{0.8cm} p{0.8cm} p{0.8cm} p{0.8cm}} 
\hline
        &$n$ & SCR &JEL & AJEL & MW & MS & BHI   & CM  &  MGG &   MOI  &  NAI  & B1 & SGN  \\[1.5ex]
\hline
N(0.5) & 25  & 0.581 & 0.488& 0.436& 0.035 & 0.257 & 0.322 & 0.338 & 0.390 & 0.181 & 0.313 & 0.298 & 0.431 \\[1ex]
       & 50   & 0.842& 0.797 &0.779& 0.251 & 0.463 & 0.056 & 0.681 & 0.702 & 0.313 & 0.557 & 0.689 & 0.644 \\[1ex]
       & 100  & 0.998& 0.984 &0.979& 0.768 & 0.790 & 0.851 & 0.923 & 0.927 & 0.554 & 0.859 & 0.968 & 0.780 \\[1ex]
       & 200  & 1.000& 1.000 &0.999& 0.992 & 0.997 & 0.990 & 0.995 & 0.994 & 0.841 & 0.990 & 0.999 & 0.868\\[3ex]

Lap(0.5) & 25   & 0.632& 0.590 &0.423& 0.059 & 0.303 & 0.217 & 0.270 & 0.422 & 0.131 & 0.218 & 0.377 & 0.436 \\[1ex]
         & 50   &0.798& 0.785 &0.656 & 0.331 & 0.626 & 0.393 & 0.605 & 0.724 & 0.222 & 0.390 & 0.668 & 0.612 \\[1ex]
         & 100 & 0.990& 0.985 &0.897& 0.798 & 0.924 & 0.667 & 0.931 & 0.962 & 0.416 & 0.677 & 0.843 & 0.841 \\[1ex]
         & 200 & 1.000 & 1.000 &0.993& 0.992 & 0.998 & 0.927 & 0.999 & 0.999 & 0.694 & 0.926 & 0.958 & 0.899\\[3ex]
        
Log(0.5) & 25   & 0.587& 0.464 &0.431& 0.043 & 0.239 & 0.280 & 0.307 & 0.388 & 0.159 & 0.282 & 0.351 & 0.428 \\[1ex]
         & 50   & 0.782& 0.720 &0.731& 0.269 & 0.492 & 0.506 & 0.660 & 0.703 & 0.290 & 0.506 & 0.650 & 0.614 \\[1ex]
         & 100  & 0.980& 0.945 &0.946& 0.779 & 0.823 & 0.803 & 0.930 & 0.938 & 0.514 & 0.808 & 0.902 & 0.792 \\[1ex]
         & 200 & 1.000 & 1.000 &0.999& 0.992 & 0.986 & 0.978 & 0.996 & 0.998 & 0.809 & 0.979 & 0.994 & 0.849\\[3ex]
       
N(1)   & 25   & 0.738& 0.820 &0.781& 0.047 & 0.409 & 0.598 & 0.689 & 0.731 & 0.369 & 0.598 & 0.620 & 0.406\\[1ex]
       & 50   & 0.887& 0.985 &0.983& 0.270 & 0.699 & 0.889 & 0.927 & 0.926 & 0.615 & 0.882 & 0.946 & 0.662 \\[1ex]
       & 100  & 0.994& 1.000 &0.999& 0.721 & 0.955 & 0.994 & 0.994 & 0.993 & 0.882 & 0.993 & 1.000 & 0.781 \\[1ex]
       & 200 & 1.000 & 1.000 &1.000& 0.982 & 0.999 & 1.000 & 1.000 & 1.000 & 0.993 & 1.000 & 1.000 & 0.868\\[3ex]
       
Lap(1) & 25   & 0.788& 0.844 &0.779& 0.071 & 0.524 & 0.467 & 0.639 & 0.780 & 0.280 & 0.466 & 0.701 & 0.462\\[1ex]
       & 50   &0.846& 0.975  &0.969& 0.369 & 0.895 & 0.762 & 0.956 & 0.974 & 0.496 & 0.766 & 0.955 & 0.688 \\[1ex]
       & 100  & 0.990& 1.000 &0.999& 0.872 & 0.997 & 0.969 & 0.999 & 0.999 & 0.784 & 0.968 & 0.993 & 0.710 \\[1ex]
       & 200 & 1.000 & 1.000 &1.000& 0.999 & 1.000 & 0.999 & 1.000 & 1.000 & 0.971 & 0.999 & 1.000 & 0.738\\[3ex]
        
Log(1) & 25   & 0.749& 0.821 &0.778& 0.051 & 0.408 & 0.556 & 0.669 & 0.739 & 0.337 & 0.558 & 0.665 & 0.544 \\[1ex]
       & 50   & 0.898& 0.982 &0.980& 0.295 & 0.741 & 0.853 & 0.936 & 0.939 & 0.590 & 0.856 & 0.874 & 0.569 \\[1ex]
       & 100 & 1.000 & 1.000 &0.999& 0.787 & 0.974 & 0.989 & 0.995 & 0.995 & 0.858 & 0.990 & 0.991 & 0.654 \\[1ex]
       & 200 & 1.000 & 1.000 &1.000& 0.994 & 1.000 & 1.000 & 1.000 & 1.000 & 0.991 & 0.999 & 1.000 & 0.757 \\[3ex]
\hline
\end{tabular} 
}
\end{table}
\end{landscape}

\begin{landscape}
\begin{table}[ht]
\centering
\caption{Empirical powers of the tests under Azzalini type distribution }
\label{table:power2}  
    \resizebox{17.5 cm}{!}{
\fontsize{9pt}{10pt}\selectfont
\begin{tabular}{p{1.5cm} p{0.6cm} p{0.8cm}p{0.8cm} p{0.8cm} p{0.8cm} p{0.8cm} p{0.8cm}p{0.8cm} p{0.8cm} p{0.8cm} p{0.8cm} p{0.8cm} p{0.8cm}} 
\hline
        &$n$&SCR &JEL  & AJEL& MW & MS & BHI   & CM  &  MGG &   MOI  &  NAI  & B1 & SGN  \\[1.5ex]
\hline
N(0.5)  & 25   &0.784& 0.545 &0.483& 0.014 &0.296 &  0.381  & 0.452& 0.354 & 0.440  & 0.381  & 0.311& 0.541 \\[1ex]
        & 50   &0.898& 0.819 &0.802& 0.089 & 0.441&  0.577  & 0.682 & 0.391 & 0.565  & 0.592  & 0.531& 0.621 \\[1ex]
        & 100  &1.000& 0.979 &0.980& 0.350 & 0.681& 0.843   & 0.895 & 0.451 & 0.932  &  0.784 &0.803 & 0.874 \\[1ex]
        & 200 & 1.000 & 1.000&0.999& 0.654 &0.897 &  1.000  & 1.000& 0.637 &  0.990 &  0.942 & 0.984& 1.000\\[3ex]

Lap(0.5) & 25   &0.571& 0.418 &0.379&0.012 & 0.038&  0.510  &0.147 & 0.528 & 0.328  & 0.285  & 0.298& 0.175 \\[1ex]
         & 50   &0.668& 0.603 &0.574&0.078 &0.191 &  0.632  &0.151& 0.687 &  0.490 &  0.538 &0.548 & 0.226 \\[1ex]
        & 100  & 0.891 & 0.979&0.881& 0.156& 0.276& 0.871   &0.169& 0.873 & 622  & 0.628  & 0.827& 0.362 \\[1ex]
        & 200 & 1.000 &1.000 &0.999& 0.206&0.359 &  1.000  &0.192& 1.000 &  0.879 & 0.820  & 0.985& 0.490 \\[3ex]
        
Log(0.5) & 25  &0.482& 0.406  &0.373&0.240 &0.324 &  0.305  &0.385& 0.373 & 0.421  &  0.398 & 0.272& 0.221 \\[1ex]
        & 50   &0.692& 0.713 &0.654&0.416 &0.492 & 0.638   &0.585& 0.551 &  0.783 &  0.734 &0.545 & 0.401 \\[1ex]
        & 100  & 0.801 & 0.910&0.917&0.610 &0.660 &  0.873  &0.814& 0.783 & 0.869  & 0.894  &0.867 & 0.570 \\[1ex]
        & 200 & 1.000 & 1.000 &0.998& 0.791&0.897 & 0.990   &0.965& 0.956 &  0.991 & 0.988  & 0.994& 0.682 \\[3ex]

N(1)   & 25  &0.428 & 0.356 &0.334& 0.004 & 0.418& 0.481 &0.308& 0.373 & 0.798  & 0.742  &0.505 & 0.304 \\[1ex]
       & 50   &0.783& 0.569 &0.559 &0.022 & 0.497& 0.610 &0.414& 0.554 &  0.849 &  0.877 &0.774 & 0.549 \\[1ex]
        & 100  &0.909& 0.791 &0.847& 0.062 & 0.612& 0.804 &0.594& 0.788 & 0.914  & 0.942  & 0.971&  0.671\\[1ex]
        & 200 & 1.000 &1.000 &0.968 &0.141 & 0.773& 1.000 &0.800& 1.000 & 1.000  & 1.000  &0.99 & 0.715 \\[3ex]

Lap(1) & 25   & 0.792 & 0.633 &0.594&0.271 &0.470 &  0.512  &0.205& 0.373 &  0.781 & 0.801  & 0.495& 0.441 \\[1ex]
      & 50   & 0.847 & 0.854 &0.780&0.369 &0.681 &  0.708  &0.203& 0.464 &  0.846 &   0.891&0.771 & 0.613 \\[1ex]
        & 100  &0.978& 0.990 &0.891&0.572 &0.798 &  0.871  &0.221& 0.636 & 0.961  & 0.948  &0.971 & 0.749 \\[1ex]
        & 200 & 1.000 & 1.000 &0.989&0.714 &1.000 &  0.968  &0.236& 0.832 & 1.000  & 1.000 & 1.000 & 0.929 \\[3ex]
        
Log(1) & 25  &0.656  & 0.766 &0.671&0.291 &0.384 & 0.309   &0.278& 0.253 & 0.761  & 0.812  & 0.124& 0.412 \\[1ex]
        & 50   &0.835& 0.852 &0.716&0.390 & 0.469&  0.422  &0.359& 0.316 & 0.810  & 0.890  & 0.196& 0.481 \\[1ex]
        & 100  &1.000 & 0.917 &0.809&0.455 & 0.562& 0.510   &0.497& 0.446 & 0.931  & 0.948  & 0.397& 0.531 \\[1ex]
        & 200 & 1.000 & 1.000 &0.971& 0.608& 0.639&  0.794  &0.681& 0.630 & 1.000  & 1.000  & 0.634& 0.611 \\[1ex]
\hline
\end{tabular}}
\end{table}
\end{landscape}

\begin{landscape}
\begin{table}[ht]
\centering
\caption{Empirical powers of the tests under contamination alternative type distribution }
\label{table:power3} 
    \resizebox{17.5 cm}{!}{
\fontsize{9pt}{10pt}\selectfont
\begin{tabular}{p{1.5cm} p{0.6cm} p{0.8cm}p{0.8cm} p{0.8cm} p{0.8cm} p{0.8cm} p{0.8cm}p{0.8cm} p{0.8cm} p{0.8cm} p{0.8cm} p{0.8cm} p{0.8cm}} 
\hline
        &$n$&SCR &JEL &AJEL & MW & MS & BHI   & CM  &  MGG &   MOI  &  NAI  & B1 & SGN  \\[1.5ex]
\hline
N(0.5)    & 25  & 0.586  & 0.411 &0.346&0.335 &0.496 &  0.567  &0.481& 0.412 & 0.392  & 0.375  &0.531 & 0.481 \\[1ex]
        & 50   &0.865& 0.621 &0.592&0.523 &0.581 & 0.642 & 0.597& 0.598 & 0.493  & 0.578  &0.701 & 0.646 \\[1ex]
        & 100  & 1.000& 0.908 &0.898 & 0.780&0.761 &  0.873  &0.694  & 0.740 & 0.680  &  0.841 & 0.907& 0.941 \\[1ex]
        & 200 & 1.000 & 1.000&0.996&0.991 &0.959 &  1.000  & 0.898 & 0.908 & 0.867  & 0.923  & 0.998& 1.000 \\[3ex]

Lap(0.5) & 25  & 0.662 & 0.517 &0.461&0.431 &0.514 & 0.407 &0.328 & 0.527 & 0.481& 0.557  & 0.597& 0.420 \\[1ex]
         & 50   &0.841 & 0.810 &0.775&0.684 & 0.669& 0.597 & 0.490& 0.619 & 0.671& 0.709  & 0.724& 0.580 \\[1ex]
        & 100  & 0.999 & 0.981 &0.977&0.806 &0.816 & 0.829   &0.789 & 0.791 &0.806 & 0.890  &0.894 &0.891  \\[1ex]
        & 200 & 1.000 & 1.000&1.000& 0.898 &0.994 &  0.968  & 0.958& 0.893 & 0.934  & 0.968  & 0.991& 0.977 \\[3ex]
        
Log(0.5) & 25   & 0.592 & 0.446 &0.357&0.384 & 0.481& 0.517&0.429 & 0.488 & 0.397  & 0.418  &0.518 & 0.499 \\[1ex]
        & 50   &0.941& 0.835 &0.703&0.509 &0.633 & 0.760 &0.629 & 0.607 & 0.559  & 0.634  &0.626 & 0.597 \\[1ex]
        & 100 &1.000  & 0.986&0.963&0.697 &0.894 & 0.919 &0.816 & 0.790 & 0.873  & 0.872  & 0.903& 0.833 \\[1ex]
        & 200 & 1.000 &1.000 &0.999&0.994 &0.989 &  1.000  & 1.000 & 0.973 & 0.955  & 1.000  & 0.996& 0.990 \\[3ex]

N(1)   & 25   &0.610& 0.919 &0.896&0.519 &0.594 & 0.684 & 0.604& 0.580 & 0.448  & 0.525  & 0.696& 0.570 \\[1ex]
       & 50   &0.941 & 0.998&0.989&0.809 &0.784 & 0.843 &0.790 & 0.729 &  0.697 & 0.785  &0.738 & 0.690 \\[1ex]
        & 100  &1.000& 1.000 &1.000&0.991 &0.968 & 0.992 &0.938 & 0.988 & 0.893  & 0.832  & 0.956& 0.968 \\[1ex]
        & 200 & 1.000 &  1.000 &1.000& 1.000 & 1.000 & 1.000   & 1.000 & 1.000 & 1.000  & 1.000  & 1.000 & 1.000 \\[3ex]

Lap(1) & 25   & 0.718 & 0.861 &0.831&0.492 & 0.643& 0.589 & 0.493& 0.679 & 0.609  & 0.678  &0.621 & 0.588 \\[1ex]
      & 50   &0.891& 0.981 & 0.946&0.746&0.797 & 0.894 & 0.609& 0.891 & 0.867  & 0.894  &0.840 & 0.798 \\[1ex]
        & 100 &0.984& 1.000  &1.000&0.869 & 0.918& 0.993 &0.972 & 0.994 & 0.969  & 0.984  & 0.992& 0.980 \\[1ex]
        & 200 & 1.000 & 1.000 &1.000& 0.948& 1.000 & 1.000 & 1.000 &0.987 &  1.000 &  1.000 & 1.000& 1.000 \\[3ex]
        
Log(1) & 25   & 0.748& 0.861 &0.833 &0.571& 0.509& 0.664 & 0.571& 0.623 & 0.527  & 0.671  &0.631 & 0.609 \\[1ex]
        & 50   &1.000& 0.893 &0.864 &0.791& 0.871& 0.890 &0.790 & 0.809 & 0.738  & 0.809  &0.907 & 0.799 \\[1ex]
        & 100 & 1.000 &0.999 &0.978&0.949 & 1.000& 0.992 &0.937 & 0.964 & 0.966  & 0.985  & 1.000 & 0.938 \\[1ex]
        & 200 & 1.000 & 1.000 &1.000& 1.000 & 1.000 & 1.000   &1.000& 1.000 &1.000   & 1.000  &1.000 & 1.000 \\[2ex]
\hline
\end{tabular}}
\end{table}
\end{landscape}

The results of empirical power compression are given in Tables \ref{table:power1}-\ref{table:power3}. From Tables \ref{table:power1}-\ref{table:power3}, we observe that our entropy-based tests showed significantly higher power than classical tests when detecting asymmetry, especially in moderate to severe skewness cases. The empirical power of the proposed tests increased with larger sample sizes. Our proposed tests generally perform better than traditional methods across various alternatives. For the Fernandez-Steel and Azzalini type alternative with samples of size $100$, the power reached above $0.85$ for most asymmetric distributions. At the same time, traditional tests, such as the SGN test, often fail to reach $0.70$. Given the overall performance shown in Tables \ref{table:size}–\ref{table:power3}, it is evident that, in most cases, our proposed tests outperform the existing ones.

\section{Data Analysis}\label{sec5}
\noindent In this section, we illustrate the proposed methodology using two real datasets. \\
\textbf{Example 1.}
     The dataset consists of 50 observations detailing the tensile strength (MPa) of fibre ($X$) as reported in \cite{abdul2023}, and this dataset follows a normal distribution with mean $\mu=3076.88$ and $\sigma=344.362$.  The depiction illustrates the fit of a normal distribution to the survival time data shown in Figure $\ref{fig:data1}$. To find the critical values of the SCR test, we have generated a random sample from the standard normal distribution, and the values are $-0.4746$ and $0.4750$. 
     
\begin{figure}[ht] 
    \centering
    \includegraphics[width=13.5cm, height=9cm]{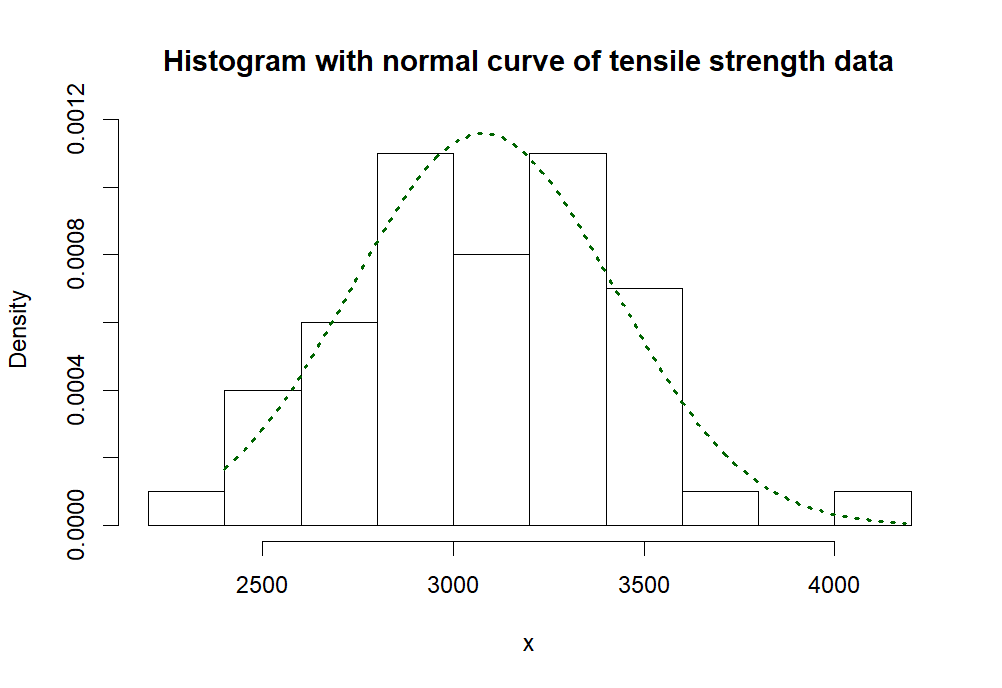}
    \caption{Modeling tensile strength data using normal distribution}
    \label{fig:data1}
\end{figure}

\noindent \textbf{Example 2.}
The dataset is sourced from \cite{qiu2018extropy} and is reported in Table \ref{dataset1}, demonstrating a good fit with an asymmetric inverse Gaussian distribution. The estimated values of parameters $(\widehat{\mu}=3.61, \widehat{\lambda}=1.588 )$ of the inverse Gaussian distribution. This dataset is used to demonstrate the applicability of the proposed test statistic. Figure \ref{fig:exp2} presents the corresponding boxplot. The proposed tests are then applied to evaluate whether the underlying distribution exhibits symmetry. Hence, the exact critical points of the SCR method are $C_1=6.37$ and $C_2=7.99$ for the $5\%$ significance level.

\begin{table}[ht]
\centering
\caption{Active repair times (in hours) for an airborne communication transceiver.} \label{dataset1}
 \resizebox{14 cm}{!}{
\fontsize{9pt}{11pt}\selectfont
\begin{tabular}{p{0.6cm} p{0.6cm} p{0.6cm} p{0.6cm} p{0.6cm}p{0.6cm}  p{0.6cm}p{0.6cm} p{0.6cm} p{0.6cm} p{0.6cm} p{0.6cm} } 
\hline
0.2& 0.3& 0.5& 0.5& 0.5& 0.5& 0.6& 0.6& 0.7& 0.7& 0.7& 0.8 \\[0.5ex]
0.8& 1.0& 1.0& 1.0& 1.0& 1.1& 1.3& 1.5& 1.5& 1.5& 1.5& 2.0 \\[0.5ex]
2.0& 2.2& 2.5& 3.0& 3.0& 3.3& 3.3& 4.0& 4.0& 4.5& 4.7& 5.0\\[0.5ex] 
5.4& 5.4& 7.0& 7.5& 8.8& 9.0& 10.3& 22.0& 24.5  \\
\hline
\end{tabular}}
\end{table}

\begin{figure}[ht]
    \centering
    \includegraphics[width=11cm, height=9cm ]{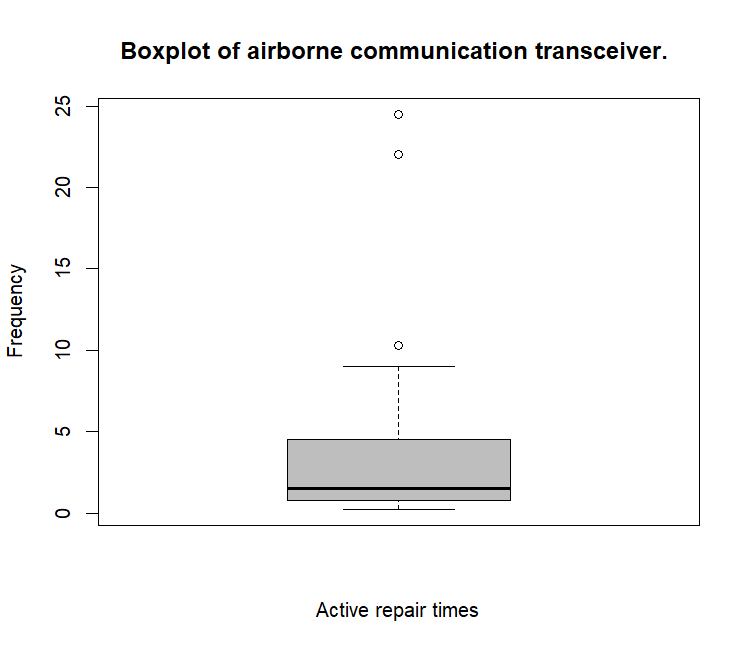}
    \caption{Boxplot}
    \label{fig:exp2}
\end{figure}

\begin{table}[!ht]
 \centering
  \caption{Test statistics along with $p$-values results for real datasets}
    \resizebox{15cm}{!}{
    \fontsize{9pt}{11pt}\selectfont
    \begin{tabular}{rrrrrrrrrrrrrrrrrrrrrrrrrrr}
    \hline
&  \multicolumn{4}{c}{Example 1} &&&& \multicolumn{4}{c}{Example 2} &&  \\
    \cline {2-6}\cline{8-13}  
    & Test statistic && $p$-value &&Decision&& &Test statistic&&$p$-value &&Decision  \\[0.2ex]
\cline{1-13}\\
SCR &0.0307 &&  1.000 ($>$0.05)   &&Accept $H_0$&&& 0.4494  && 0.000 ($<$0.05) && Reject $H_0$  \\[0.2ex]  
JEL &0.1278 && 0.900 ($>$0.05)   &&Accept $H_0$&&& 51.7850 && 0.002 ($<$0.05)  && Reject $H_0$  \\[0.2ex]
AJEL & 0.1176 && 0.916 ($>$0.05)   &&Accept $H_0$&&& 19.3561   && 0.001 ($<$0.05)  && Reject $H_0$  \\[0.2ex]
\hline
    \end{tabular}%
    }
  \label{tab: results}%
\end{table}%

The critical values were obtained by applying the tests to the data, while bootstrap $p$-values were obtained by generating $10000$ samples from the normal distribution using the estimated parameter. Each test was applied to the sample data, and the frequency with which the test statistic exceeded the corresponding critical value was noted. The bootstrap $p$-value was then calculated as the proportion of such occurrences out of the total number of samples. From Table \ref{tab: results}, we observed that the null hypothesis of symmetry is not rejected at the $5\%$ significance level for the tensile strength data, suggesting that a symmetric distribution can appropriately model the tensile strength (in MPa) of the fibre. In contrast, the symmetry hypothesis is rejected for the airborne communication transceiver dataset, indicating that the active repair times (in hours) exhibit an asymmetric distribution.

\section{Conclusion}\label{sec6} 
\noindent In this paper, we have developed two nonparametric tests for assessing the symmetry of a distribution. Using the definitions of generalized cumulative residual entropy and generalized cumulative entropy, we explored and illustrated a novel characterization of the continuous symmetry distribution with examples. We have thoroughly examined the properties of these tests. Given that calculating the null variance of our proposed test can be challenging in practical settings, we have introduced two additional nonparametric tests based on the jackknife empirical likelihood (JEL) and adjusted jackknife empirical likelihood (AJEL) ratio. Through Monte Carlo simulations, we have demonstrated the effectiveness of our proposed tests against various alternative hypotheses. The findings suggest that all the proposed tests demonstrate strong performance across various alternative scenarios. Finally, we have illustrated the applicability of the proposed tests through real-world examples involving tensile strength and airborne communication transceivers datasets.

We conclude this paper by highlighting the potential for future research in exploring new testing problems and developing similar characterization-based tests using alternative measures, such as extropy.
 

\end{document}